\newcommand {\N} {{\mathbb N}}
\newcommand {\C} {{\mathbb C}}
\newcommand {\R} {{\mathbb R}}
\newcommand {\Z} {{\mathbb Z}}
\newcommand {\Q} {{\mathbb Q}}
\newcommand {\PP} {{\mathbb P}}
\newcommand {\OO} {{\mathcal O}}
\newcommand {\dt} {\bullet}
\newcommand {\V} { \mathcal{V}}
\DeclareMathOperator{\Spec}{Spec}
\DeclareMathOperator{\im }{im}
\newtheorem{thm}[subsection]{Theorem}
\newtheorem{cor}[subsection]{Corollary}
\newtheorem{lemma}[subsection]{Lemma}
\newtheorem{prop}[subsection]{Proposition}
\newtheorem{remark}[subsection]{Remark}
\begin{document}

 \title[Hodge theory of cyclic covers]{Hodge theory of cyclic covers branched over a union of hyperplanes}

\author{ Donu Arapura}

\thanks{Partially supported by NSF}
\address{Department of Mathematics\\
Purdue University\\
West Lafayette, IN 47907\\
U.S.A.}
\subjclass[2010]{14C30}
\begin{abstract}
  Suppose that $Y$ is a cyclic cover of projective space branched over a hyperplane arrangement $D$, and that $U$ is the complement of the ramification locus in $Y$. The first theorem implies that the Beilinson-Hodge conjecture holds for $U$ if certain multiplicities of $D$ are coprime to the degree of the cover. For instance this applies when $D$ is reduced with normal crossings. The second theorem shows that when $D$ has normal crossings and the degree of the cover is a prime number, the generalized Hodge conjecture holds for any toroidal resolution of $Y$. The last section contains some partial extensions to more general nonabelian covers.
\end{abstract}
\maketitle

The principal goal of this  paper is to verify some standard conjectures in Hodge
theory for a natural class of examples.
Fix  integers $d>1$, $m,n\ge 1$, and consider the cyclic cover $\pi:Y\to \PP_\C^n$ defined
by the weighted homogeneous equation $y^d=f(x_0,\ldots, x_n)$, where $f$ is a product of $md$ distinct  linear
forms. Let $D$ be the divisor defined by $f=0$ and let $E=
\pi^{-1}D$. Our first theorem is that the Beilinson-Hodge conjecture,
as formulated in \cite{as}, holds for $U=Y-E$ if for instance $D$ has
normal crossings. This means that all weight $2j$ Hodge cycles
in $H^j(U,\Q)$ lie in the image of the cycle map from motivic
cohomology. 
The key point is to show
that the weight $2j$ Hodge  cycles on $U$ come from $\PP^n-D$. Then the theorem is almost
immediate.   We note that the theorem is valid even in some cases when
$D$ fails to be reduced or  have normal crossings. The
precise  condition is that the multiplicities of the components of $D$, and
their sums at points where $D$ fails to have  normal crossings, should be
coprime to $d$.

For the second result, we   assume that
$D$ is an arrangement of $d$ hyperplanes with  normal crossings (so
that $m=1$). Then $Y$ is a
singular toroidal variety, so we may choose a toroidal desingularization $X\to Y$
\cite{mumford}.  
Our second theorem implies that the generalized Hodge
conjecture \cite{groth} holds for
$X$ when $d$ is prime. 
 Another notable consequence of the  theorem
is that when $n$ is odd and $d$ prime, the maximal abelian subvariety
of the intermediate Jacobian $J^n(X)$ is zero.
The verification  of the Hodge conjecture for
$X$ goes as follows.  We check that all the   Hodge cycles on $X$ are
either algebraic or pull backs of Hodge cycles from $Y$. 
To analyze the cycles in the second
category, we employ a nice  trick used by
Shioda \cite{shioda} in a similar context.
 By  exploiting the action of $\Z/d\Z$
on cohomology,  we obtain  a very strong bound on the dimension of the space of
Hodge cycles on $Y$. When $d$ is prime, it will imply that there are no
transcendental Hodge cycles.

Let us now indicate the nature of the bounds used above. Given a
smooth projective variety $Y$, or  more generally an orbifold  as above, the dimension of the space of Hodge cycles of type
$(p,p)$ on it is of course bounded by the Hodge number $h^{pp}$. This can be
written as the $2p$th Betti number minus $2\dim T$, where $T\subset H^{2p}(Y)$ is the space
of $(a,b)$ classes with $a<b$. If a possibly nonabelian finite group $G=\{1,g_1,\ldots\}$
acts on $Y$, then $2\dim T$ can be replaced by the dimension of  the smallest
rationally defined $G$-module containing $T+\bar T$. In some cases
this yields a huge improvement. For example, if $G$ acts irreducibly
on $T$ with a nonreal character $\chi$. Then the factor in front of $\dim T$
jumps from $2$ to the
product of the degree of the number field $\Q(\chi(g_1),\ldots)$ times the
Schur index of $\chi$.
This follows from  more general results given in the final section of the paper.
The main result in this section is that under suitable conditions,  there is an explicit bound for the
dimension of the space of Hodge cycles on a branched $G$-cover in
terms of branching data.

\section{Preliminaries on cyclic covers}

We start with a slightly more general set up than given in the
introduction. Let $Z$ be an $n$ dimensional smooth projective variety with a
line bundle $L$. Let $D\subset Z$ be a not necessarily reduced effective divisor with simple normal
crossings such that $\OO_Z(D)\cong L^d$. Then we form the normal $d$-fold cyclic
cover 
$$Y= \text{Normalization of }\mathbf{Spec}(
\bigoplus_{i=0}^{d-1} L^{-i})\stackrel{\pi}{\to} Z$$
branched over $D$ (cf. \cite[\S 3]{ev}).  
Let $V= Z-D$, $E=\pi^{-1}D$ and $U=Y-E$. It is convenient to set $\Omega^k_Z(\log
  D)=\Omega_Z^k(\log D_{red})$ below.
The following is probably known to experts, but we do not know of a
good reference.

\begin{lemma}\label{lemma:toric0}
  $Y$ is local analytically isomorphic to a toric variety with finite
  quotient singularities.
\end{lemma}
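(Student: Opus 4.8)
The plan is to reduce to a local model on $Z$ and identify it with a simplicial affine toric variety, for which the quotient‑singularity statement is standard. Fix $p\in Y$, let $z=\pi(p)$, and use the simple normal crossings hypothesis to choose an analytic polydisc $\Delta$ around $z$ with coordinates $x_1,\dots,x_n$ in which $D=\sum_{i=1}^k a_i\{x_i=0\}$, together with a trivialization of $L$ over $\Delta$. If $k=0$ then $\pi$ is étale over $\Delta$ and there is nothing to prove, so assume $k\ge1$; the canonical section of $L^d\cong\OO_Z(D)$ then becomes $u\cdot x_1^{a_1}\cdots x_k^{a_k}$ with $u$ a nowhere‑vanishing holomorphic function on $\Delta$. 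Since $\Delta$ is simply connected, $u$ has a holomorphic $d$‑th root, which I would absorb into the trivialization; the cyclic cover over $\Delta$ is then the normalization of the hypersurface $\{y^d=x_1^{a_1}\cdots x_k^{a_k}\}$ in $\Delta\times\C$. As normalization commutes with analytification and with restriction to open subsets, it suffices to prove that the normalization $W$ of the affine variety $\Spec\C[x_1,\dots,x_n,y]/(y^d-x_1^{a_1}\cdots x_k^{a_k})$ has only finite quotient singularities — in fact that each connected component is a simplicial affine toric variety.

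Put $e=\gcd(d,a_1,\dots,a_k)$. From the factorization $y^d-\prod x_i^{a_i}=\prod_{\zeta^e=1}(y^{d/e}-\zeta\prod x_i^{a_i/e})$, $W$ is the disjoint union of $e$ copies of the normalization of $\{y^{d/e}=\prod x_i^{a_i/e}\}$ (after absorbing $\zeta$ into $y$), so I may and do assume $\gcd(d,a_1,\dots,a_k)=1$; then $y^d-\prod x_i^{a_i}$ is irreducible, since no prime dividing $d$ divides every $a_i$. Let $w=(a_1,\dots,a_k,0,\dots,0,-d)\in\Z^{n+1}$; it is primitive, so $P:=\Z^{n+1}/\Z w$ is free of rank $n$. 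Write $q$ for the projection, $\bar e_j=q(e_j)$, and $S=\N\bar e_1+\dots+\N\bar e_{n+1}\subseteq P$, an affine semigroup generating $P$ as a group. The $\C$‑algebra surjection $\C[x_1,\dots,x_n,y]\to\C[S]$, $x_i\mapsto\chi^{\bar e_i}$, $y\mapsto\chi^{\bar e_{n+1}}$, kills $y^d-\prod x_i^{a_i}$, hence factors through a surjection $\C[x,y]/(y^d-\prod x_i^{a_i})\to\C[S]$ of $n$‑dimensional domains, which is therefore an isomorphism. Thus $W=\Spec\C[\,\overline S\,]$, where $\overline S=\sigma^\vee\cap P$ is the saturation of $S$ and $\sigma^\vee=q(\R^{n+1}_{\ge0})=\langle\bar e_1,\dots,\bar e_{n+1}\rangle$; in particular $W$ is an affine toric variety of dimension $n$.

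It remains to check that $\sigma^\vee\subseteq P_\R$ is simplicial. The relation $\sum_{i=1}^k a_ie_i-d\,e_{n+1}=w$ gives $\bar e_{n+1}=\tfrac1d\sum_{i=1}^k a_i\bar e_i$, a nonnegative combination with all coefficients positive (each $a_i\ge1$), so $\bar e_{n+1}$ lies in the cone on $\bar e_1,\dots,\bar e_k$ and is a redundant generator; hence $\sigma^\vee=\langle\bar e_1,\dots,\bar e_n\rangle$. These $n$ vectors are linearly independent, for a relation $\sum_{j=1}^n c_j\bar e_j=0$ lifts to $\sum_{j=1}^n c_je_j\in\R w$, which is impossible unless all $c_j=0$ since the left‑hand side has vanishing $(n+1)$‑st coordinate while no nonzero multiple of $w$ does. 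So $\sigma^\vee$, and with it $\sigma$, is simplicial, and $\Spec\C[\sigma^\vee\cap P]$ is a simplicial affine toric variety, i.e. of the form $\C^n/G$ for a finite abelian group $G$; this has only finite quotient singularities, which would complete the proof. The parts that demand some care are the identification of the normalization with the saturated semigroup ring (including the reduction to the connected, coprime case) and the observation that the last cone generator $\bar e_{n+1}$ is superfluous.
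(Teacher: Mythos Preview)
Your proof is correct and follows essentially the same strategy as the paper's: reduce to the local binomial hypersurface $y^d=\prod x_i^{a_i}$, pass to the coprime/connected case by splitting off the $\gcd$, identify the coordinate ring with an affine semigroup ring, normalize by saturating, and observe that the resulting cone is simplicial so that the toric variety has finite quotient singularities. The only cosmetic difference is that you realize the lattice as the quotient $P=\Z^{n+1}/\Z w$ while the paper embeds it as the sublattice $L\subset\Z^n$ generated by $de_i$ and $(a_1,\dots,a_n)$ via $x_i\mapsto u_i^d$, $y\mapsto\prod u_i^{a_i}$; the map $\bar e_i\mapsto v_i$ identifies the two, and the simpliciality arguments (your redundancy of $\bar e_{n+1}$ versus the paper's observation that $v_1,\dots,v_n$ are already a real basis) are the same observation in these two presentations.
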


\begin{proof}
  Local analytically
$Y$ looks like the normalization of an affine variety of
the form
\begin{equation}
  \label{eq:toric1}
y^d = x_1^{a_1}\ldots x_n^{a_n}  
\end{equation}
Let $R = R_{(a_1,\ldots, a_n,d)}$
be the
quotient of $\C[x_1,\ldots, x_n,y]$ by the ideal generated by the
difference of the two sides of this equation, and let $\tilde R_{(a_1,\ldots, a_n,d)}$ denote its normalization.
The most important case for us is when all  the exponents $a_{i}=1$.
In this case,  the lemma is easy to see directly. The ring $R$
is the ring of invariants of $\C[u_1,\ldots, u_n, v]$
under $(\zeta_j)\in (\mu_{d})^{n}$ acting by 
$$u_j\mapsto \zeta_j u_j;\> v\mapsto \prod \zeta_j^{-1}v$$
 Therefore, in this case, $R$ is already normal.
 The fact that it is also toric is immediate from the shape of the
 equation \eqref{eq:toric1} which is the equality of two monomials.

 For the general case, we will use toric methods more explicitly. But
first, we make a series of reductions.
 Let $g= \gcd(a_1,\ldots,
a_n,d)$.
If $g>1$, then $\Spec R$ is reducible and the components are
isomorphic to $\Spec  R_{(a_1/g,\ldots, a_n/g,d/g)}$. Therefore, we may
reduce to the case that $g=1$.

 If some $a_i=0$, then 
$$R= R_{(a_1,\ldots\hat{a_i}, a_n,d)}\otimes \C[x_i]$$
Thus we may assume that all $a_i>0$.
Let $S\subset \Z^n$ be the sub semigroup  generated by 
$v_1=(d,0,\ldots 0), \ldots, v_n= (0,\ldots, d),$ $ v_{n+1}=
(a_1,\ldots, a_n)$, and let $L\supset S$ denote the sublattice
generated by the same vectors.
The semigroup ring $\C[S]$ can be identified with the subring of
$\C[u_1,\ldots, u_n]$ generated by $u_1^{d}, \ldots, u_n^{d}$ and
$u_1^{a_1}\ldots u_n^{a_n}$. 
The  homomorphism 
$$\C[x_1,\ldots, x_n,y]\to \C[S]$$
defined by 
$$x_i\mapsto u_i^d, y\mapsto u_1^{a_1}\ldots u_n^{a_n}$$
gives an isomorphism $R\cong \C[S]$. The normalization $\tilde R$ of $R$
is given  by the semigroup ring $\C[\tilde S]$, where $S\subset \tilde
S\subset L$ is the saturation \cite[Chap 1,\S 3]{cls}, which is
the intersection of $L$ with the real cone 
$$\{\sum r_iv_i\in L\otimes \R\mid r_i\ge 0\} $$
$\tilde S$ is simplicial, since it is generated by the real basis vectors $v_1,\ldots,
v_n$. Therefore $\Spec R$ has quotient singularities \cite[Chap 3, \S 1]{cls}.
\end{proof}

Thus $Y$ is an orbifold, which for our purposes simply means that it
has finite quotient singularities. Note also that  the  singularities lie over the singular locus
$D_{sing}\subset D$. Bailey \cite{bailey}, and later
Steenbrink \cite{steenbrink}, observed that
most of the standard Hodge theoretic statements generalize from smooth
varieties to  orbifolds. We list the
results that we need from the second reference.

\begin{enumerate}
\item[(H1)] The mixed Hodge structure on $H^i(Y)$ is pure of weight $i$.
We can identify $Gr_F^kH^i(Y)$ with $H^{i-k}(Y,\tilde \Omega_Y^k)$
where $\tilde \Omega_Y^k:=(\Omega_Y^k)^{**}= j_*\Omega_{W}^k$
and $j:W\to Y$ is the embedding of the smooth locus.

\item[(H2)] The hard Lefschetz theorem holds.

\item[(H3)] There is an (noncanonical) isomorphism
$$H^i(Y-D,\C)\cong \bigoplus_{a+b=i} H^a(Y,\tilde
\Omega_Y^b(\log D))$$
where $\tilde \Omega_Y^k(\log D)=j_*\Omega_W^k(\log D\cap W)$.
(Although this is not explicitly stated there, it follows from the discussion in \cite[1.17-1.20]{steenbrink}
and  the fact that  the spectral sequence
associated to $(\tilde \Omega_Y^\dt(\log D),
\tilde \Omega_Y^{\ge k}(\log D)$) degenerates at $E_1$ because it is
part of a cohomological mixed Hodge complex  \cite[8.1.9]{deligne}.)

\end{enumerate}

The group of $d$th roots of unity $\mu_d\cong \Z/d$ acts on $Y$, and we will need to
analyze the eigenspaces on cohomology.  Let $\epsilon:\mu_d\to \C^*$ denote the
standard character given by $\epsilon(\zeta)=\zeta$. Any $\C[\mu_d]$-module $T$ can
be  decomposed as a sum $T=\bigoplus_{i=0}^{d-1} T_{\epsilon^i}$ where
$T_{\epsilon^i}$ is the maximal submodule where $\zeta\in \mu_d$ acts
by multiplication by
$\epsilon^i(\zeta)=\zeta^i$. Define  the nontrivial part of $T$ by $T_{nt}=\bigoplus_{i=1}^{d-1} T_{\epsilon^i}$.
Let $H^{jk}_?(Y)=H^k(Y,\tilde \Omega_Y^j)_?$ etc.,  where
$?=\epsilon^i, nt$.

\begin{lemma}[Esnault-Viehweg]\label{lemma:ev}
Let $D=\sum a_jD_j$, where $D_j$ are the irreducible components. 
Let $[rD] = \sum [a_jr] D_j$, $L^{(-i)}=
L^{-i}\left(\left[\frac{i}{d}D\right]\right) $, and $D^{(i)}$ be the sum of components $D_j$ such that $d\not| ia_j$.
Then
  $$H _{\epsilon^i}^j(Y,\tilde \Omega_Y^k)=
\begin{cases}
H^j(\Omega_{Z}^k) & \text{ if } i=0\\
H^j(\Omega_{Z}^k(\log D^{(i)})\otimes L^{(-i)}) & \text{ otherwise}
\end{cases}
$$

 $$H _{\epsilon^i}^j(Y,\tilde \Omega_Y^k(\log E))=
\begin{cases}
H^j(\Omega_{Z}^k(\log D)) & \text{ if } i=0\\
H^j(\Omega_{Z}^k(\log D)\otimes L^{(-i)}) & \text{ otherwise}
\end{cases}
$$
\end{lemma}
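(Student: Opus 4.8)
The plan is to reduce the two formulas to an identification of reflexive sheaves on $Z$, and then to a local computation at the generic point of each component of $D$. Since $\pi$ is finite, hence affine, $\pi_*$ is exact and $H^j(Y,\F)=H^j(Z,\pi_*\F)$ for every coherent sheaf $\F$ on $Y$; as $\pi$ is $\mu_d$-equivariant, $\pi_*$ moreover preserves the decomposition into $\epsilon^i$-eigensheaves. So it suffices to identify, as coherent $\OO_Z$-modules, the sheaves $(\pi_*\tilde\Omega_Y^k)_{\epsilon^i}$ and $(\pi_*\tilde\Omega_Y^k(\log E))_{\epsilon^i}$ with the ones on the right; the case $i=0$ is just the statement that invariant (logarithmic) $k$-forms descend to the quotient $Z$, so assume $i\neq0$.

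Both $\pi_*\tilde\Omega_Y^k$ and $\pi_*\tilde\Omega_Y^k(\log E)$ are reflexive $\OO_Z$-modules: $\tilde\Omega_Y^k=j_*\Omega_W^k$ is reflexive on $Y$ by construction, and so is $\tilde\Omega_Y^k(\log E)$; moreover the direct image of a reflexive sheaf under a finite morphism of normal varieties is again reflexive. The candidate sheaves on the right are locally free twists of $\Omega_Z^k(\log D^{(i)})$ and $\Omega_Z^k(\log D)$, hence reflexive too. Because $D$ has normal crossings, $D_{sing}$ has codimension $\ge2$ in $Z$, and the singular locus of $Y$ lies over $D_{sing}$; thus it is enough to prove the isomorphisms over $Z^\circ=Z\setminus D_{sing}$, where $Y$ is smooth. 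Over $Z^\circ\cap V$ the cover $\pi$ is \'etale, both sides become $\Omega^k\otimes\pi_*\OO_Y$, and the identity $(\pi_*\OO_Y)_{\epsilon^i}\cong L^{(-i)}$ of \cite[\S3]{ev} disposes of that open set, so the content is concentrated near the generic point of a component $D_j$, of multiplicity $a_j$.

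There $Y$ is locally the normalization of $\{y^d=x_1^{a_j}\}$, with $x_1$ a local equation of $D_j$. Writing $g=\gcd(a_j,d)$, $d=gd'$, $a_j=ga'$, the equation splits into $g$ smooth sheets, on each of which $x_1=s^{d'}$ for a suitable coordinate $s$; the subgroup $\mu_{d'}\subset\mu_d$ stabilizes each sheet while $\mu_d/\mu_{d'}$ permutes them transitively, so $\pi_*\OO_Y$ is induced from the $\mu_{d'}$-module $\bigoplus_{m=0}^{d'-1}s^m\OO_Z$, and $(\pi_*\OO_Y)_{\epsilon^i}$ is the line $s^{a'i\bmod d'}\OO_Z$, which one checks equals $L^{(-i)}$ locally. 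From $d'\,\frac{ds}{s}=\frac{dx_1}{x_1}$ one gets $\tilde\Omega_Y^1(\log E)=\pi^*\Omega_Z^1(\log D)$, hence $\tilde\Omega_Y^k(\log E)=\pi^*\Omega_Z^k(\log D)$, and the projection formula gives the second formula. For the first, $\tilde\Omega_Y^k$ sits inside $\pi^*\Omega_Z^k(\log D)$ as the subsheaf whose coefficient along each $\frac{dx_1}{x_1}\wedge(\cdots)$ lies in $s\OO_Y$ (since $ds=s\cdot\frac{ds}{s}$); on $\epsilon^i$-eigenspaces that coefficient is forced into $s\OO_Y\cap s^{a'i\bmod d'}\OO_Z$, which is the full line $s^{a'i\bmod d'}\OO_Z$ — so the logarithmic pole along $D_j$ persists — exactly when $d'\nmid a'i$, i.e. when $d\nmid a_ji$, i.e. when $D_j\subset D^{(i)}$; otherwise the coefficient lands in $x_1\OO_Z$ and the pole is killed. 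Assembling the contributions of all the $D_j$ yields the first formula over $Z^\circ$, and reflexivity propagates both formulas to $Z$.

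The one genuinely non-formal step is the local analysis at a non-reduced component, where $g=\gcd(a_j,d)>1$ and $Y$ is reducible over the generic point of $D_j$: here one must track the eigenspaces of the induced $\mu_d$-representation carefully and match the resulting round-down exponents with $L^{(-i)}$; when $g=1$ this is immediate. Alternatively, the first formula is exactly the computation of $\pi_*\tilde\Omega_Y^k$ in \cite[\S3]{ev}, and the logarithmic version follows either by rerunning that argument with $\log E$ in place, or from the residue sequence relating $\tilde\Omega_Y^k$, $\tilde\Omega_Y^k(\log E)$ and their counterparts on $Z$.
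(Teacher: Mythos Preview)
Your proof is correct and follows essentially the same route as the paper: pass to $\pi_*$ by finiteness, observe that both sides are reflexive, reduce to $Z\setminus D_{sing}$, and there invoke the local identification of $(\pi_*\tilde\Omega_Y^k)_{\epsilon^i}$ due to Esnault--Viehweg. The only difference is one of presentation: the paper simply cites \cite[lemma 3.16]{ev} for the codimension-one computation, whereas you unwind that computation explicitly (normalizing $y^d=x_1^{a_j}$, tracking the induced $\mu_{d'}$-action on each sheet, and reading off when the logarithmic pole survives on the $\epsilon^i$-eigenline).
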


\begin{proof}
Since $\pi$ is finite, we have $H^j(Y,\tilde \Omega_Y^k)=H^j(Z,\pi_*\tilde
\Omega_Y^k)$ as $\C[\mu_d]$-modules. Let $W=Z-D_{sing}$.
Esnault and Viehweg \cite[lemma 3.16]{ev} have shown  that
$$(\pi_*\tilde \Omega_Y^k)_{\epsilon^i}|_W = 
\begin{cases}
\Omega_{W}^k & \text{ if } i=0\\
\Omega_{W}^k(\log D^{(i)})\otimes L^{(-i)}& \text{ otherwise}
\end{cases}
$$
Equality extends to  $Z$ because
these sheaves are reflexive.

The second part also follows from [loc. cit] by the same argument.
\end{proof}

\begin{cor}\label{cor:ev}
The invariant part  $H_{\epsilon^0}^*(Y)$ is isomorphic to
$H^*(Z)$.
\end{cor}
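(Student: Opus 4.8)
The plan is to combine the orbifold Hodge theory recorded in (H1)--(H3) with the Esnault--Viehweg computation of Lemma~\ref{lemma:ev} specialized to the trivial character. (Note that in the statement of that lemma the index ``$i$'' is the \emph{character} index; below $i$ always denotes cohomological degree, and the relevant case of the lemma is its ``$i=0$'' case.)

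First I would split the Hodge filtration. By (H1) the mixed Hodge structure on $H^i(Y)$ is pure of weight $i$, so the Hodge decomposition gives a (noncanonical) isomorphism of $\C$-vector spaces $H^i(Y,\C)\cong\bigoplus_k Gr_F^k H^i(Y)\cong\bigoplus_k H^{i-k}(Y,\tilde\Omega_Y^k)$. Since $\mu_d$ acts on $Y$ by automorphisms, each $\zeta\in\mu_d$ acts on $H^i(Y)$ as a morphism of Hodge structures and hence preserves every Hodge summand $H^{p,q}$; therefore the displayed isomorphism is one of $\C[\mu_d]$-modules. Passing to $\epsilon^0$-isotypic parts yields
$$H^i_{\epsilon^0}(Y)\;\cong\;\bigoplus_k H^{i-k}_{\epsilon^0}(Y,\tilde\Omega_Y^k).$$
Now I would apply Lemma~\ref{lemma:ev} in the trivial-character case, which identifies each summand $H^{i-k}_{\epsilon^0}(Y,\tilde\Omega_Y^k)$ with $H^{i-k}(Z,\Omega_Z^k)$. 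Summing over $k$ and using the Hodge decomposition of the smooth projective variety $Z$ gives $H^i_{\epsilon^0}(Y)\cong\bigoplus_k H^{i-k}(Z,\Omega_Z^k)\cong H^i(Z,\C)$, which is the numerical content of the corollary.

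To obtain a canonical isomorphism (in particular one of rational Hodge structures, which is what the applications will use), I would instead exhibit the map directly: the pullback $\pi^*\colon H^i(Z)\to H^i(Y)$ has image in the invariant part $H^i_{\epsilon^0}(Y)$ because $\pi\circ\zeta=\pi$ for all $\zeta\in\mu_d$, and it is injective since $\frac1d\pi_*\circ\pi^*=\mathrm{id}$ ($\pi$ being finite of degree $d$). The dimension count just carried out then forces $\pi^*$ to be an isomorphism onto $H^i_{\epsilon^0}(Y)$; equivalently one can invoke the transfer isomorphism $H^*(Y,\Q)^{\mu_d}\cong H^*(Z,\Q)$ for the finite quotient $Z=Y/\mu_d$, which is legitimate since $Y$ has only quotient singularities.

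I do not expect a genuine obstacle here, as the real work is already packaged in Lemma~\ref{lemma:ev} and in the Steenbrink-type facts (H1)--(H3). The only points needing a word of care are the $\mu_d$-equivariance of the (noncanonical) Hodge splitting in the first step — immediate once one observes that $\mu_d$ acts by Hodge morphisms — and, if one wants more than equality of Hodge numbers, the identification of the abstract isomorphism with $\pi^*$, for which the projection formula for the degree-$d$ map $\pi$ suffices.
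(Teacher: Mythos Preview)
Your proposal is correct and is precisely the elaboration the paper leaves implicit: the corollary is stated immediately after Lemma~\ref{lemma:ev} with no separate proof, so the intended argument is exactly to take the trivial-character case of that lemma and reassemble the Hodge pieces via (H1). Your additional remark identifying the isomorphism with $\pi^*$ (using $\frac{1}{d}\pi_*$ as a left inverse) is a useful sharpening and matches how the paper later uses this result.
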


\begin{remark}
The above formulas  simplify under the following assumptions
\begin{enumerate}
\item If the coefficients $a_j$ are coprime to $d$ then $D^{(i)}=D_{red}$
  for all $1\le i\le d-1$. This coprimality condition is equivalent to the map $Y\to Z$ being
  totally ramified along $D$. 
\item If $D$ is reduced then additionally $L^{(-i)}=L^{-i}$
\end{enumerate}
\end{remark}

The following is a special case of much more general vanishing
theorems due to Esnault and Viehweg \cite{ev1}.

\begin{lemma}\label{lemma:van}
Suppose that $L$ is ample and that the coefficients of $D$ are coprime to $d$.
 If $m+k\not= n$ and $1\le i\le d-1$ then  $H^m(\Omega^k_{Z}(\log
 D)\otimes L^{(-i)})=0$.
\end{lemma}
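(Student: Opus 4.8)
The quickest route is to cite the general Esnault--Viehweg vanishing theorem of \cite{ev1} directly: one applies it with the ample line bundle $L$, the integer $d$, and the fractional divisor $D=\sum a_jD_j$, noting that the coprimality hypothesis forces $D^{(i)}=D_{red}$ for every $1\le i\le d-1$, so that the sheaf in question is exactly the eigensheaf $\Omega^k_Z(\log D)\otimes L^{(-i)}$ of Lemma~\ref{lemma:ev}; the theorem then yields the vanishing away from $m+k=n$. I would prefer, however, to argue using only what has already been set up, since this makes transparent where coprimality is needed. For the range $m+k>n$: since $\OO_Z(D)\cong L^d$ with $L$ ample, $\pi^*D$ is an ample effective Cartier divisor on $Y$ with support $E$, so $U=Y-E$ is an affine variety of dimension $n$, whence $H^r(U,\C)=0$ for $r>n$. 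By the decomposition (H3) (applied with $E$ playing the role of the boundary divisor there), every summand $H^a(Y,\tilde\Omega^b_Y(\log E))$ with $a+b>n$ vanishes; passing to the $\epsilon^i$--part for $1\le i\le d-1$ and using the second formula of Lemma~\ref{lemma:ev}, we conclude that $H^m(\Omega^k_Z(\log D)\otimes L^{(-i)})=0$ whenever $m+k>n$. Note that this half does not use the coprimality hypothesis.

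The range $m+k<n$ is then reduced to the one just treated by Serre duality on the smooth projective variety $Z$. Since $\Omega^k_Z(\log D)$ is locally free and $(\Omega^k_Z(\log D))^\vee\otimes\omega_Z\cong\Omega^{n-k}_Z(\log D)(-D_{red})$, we have
$$H^m\bigl(\Omega^k_Z(\log D)\otimes L^{(-i)}\bigr)^\vee\cong H^{n-m}\bigl(\Omega^{n-k}_Z(\log D)\otimes(L^{(-i)})^{-1}(-D_{red})\bigr).$$
The decisive point, and the one genuine use of coprimality, is the identity of line bundles
$$(L^{(-i)})^{-1}(-D_{red})\cong L^{(-(d-i))}\qquad(1\le i\le d-1),$$
obtained by unwinding $L^{(-i)}=L^{-i}\otimes\OO_Z([\tfrac{i}{d}D])$, using $\OO_Z(D)\cong L^d$, and observing that $[\tfrac{i}{d}a_j]+[\tfrac{d-i}{d}a_j]=a_j-1$ precisely when $d\nmid ia_j$ (if instead $d\mid ia_j$ the sum equals $a_j$ and the identity fails); this holds for all $j$ exactly because $\gcd(a_j,d)=1$ and $1\le i\le d-1$. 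Granting this, the group on the right equals $H^{n-m}(\Omega^{n-k}_Z(\log D)\otimes L^{(-(d-i))})$, and as $(n-m)+(n-k)>n$ it vanishes by the case already established (with $i$ replaced by $d-i$). Hence $H^m(\Omega^k_Z(\log D)\otimes L^{(-i)})=0$ for $m+k<n$ as well, completing the proof.

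The only real obstacle is the line-bundle identity of the second paragraph, which is the arithmetic core of the statement; everything else is formal, given (H1)--(H3), Lemma~\ref{lemma:ev}, and the affineness of $U$. If one prefers the one-line argument through \cite{ev1}, the corresponding point to verify is that $(Z,L,d,D)$ meets the precise hypotheses of the vanishing theorem cited there --- which once more comes down to the nonvanishing of the rounding defects $\langle ia_j\rangle_d$, i.e. to coprimality.
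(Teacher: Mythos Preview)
Your argument is correct. The first half ($m+k>n$) is identical to the paper's: embed the group via the second formula of Lemma~\ref{lemma:ev} and (H3) into $H^{m+k}(U,\C)$ and use that $U$ is affine. For the range $m+k<n$, however, the paper proceeds differently. Instead of Serre duality on $Z$, it invokes the hard Lefschetz theorem (H2) on the orbifold $Y$ with respect to $\pi^*L$: since cup product with $c_1(\pi^*L)^{j}$, $j=n-(m+k)$, commutes with the $\mu_d$--action and shifts Hodge type by $(j,j)$, one obtains an isomorphism
\[
H^m\bigl(\Omega^k_Z(\log D)\otimes L^{(-i)}\bigr)\;\cong\;H^{m+j}\bigl(\Omega^{k+j}_Z(\log D)\otimes L^{(-i)}\bigr),
\]
after identifying both sides with the appropriate $\epsilon^i$--eigenspaces of $H^{\ast}(Y,\tilde\Omega_Y^{\ast})$ via the \emph{first} formula of Lemma~\ref{lemma:ev}; since $(m+j)+(k+j)>n$, the right side vanishes by the case already treated. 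Coprimality enters the paper's proof precisely in order to write $D^{(i)}=D_{red}$, so that the cohomology in the statement really is an eigenspace of compact-type Hodge cohomology of $Y$; in your proof it enters instead through the identity $(L^{(-i)})^{-1}(-D_{red})\cong L^{(-(d-i))}$, which is the Serre-dual shadow of the same fact. Your route has the virtue of staying entirely on the smooth base $Z$ and avoiding the orbifold hard Lefschetz input (H2); the paper's route has the virtue of making the result an immediate consequence of the Hodge-theoretic package already assembled for $Y$, with the same eigen-index $i$ on both sides.
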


\begin{proof}
We have
$$H^m(\Omega^k_{Z}(\log D)\otimes L^{(-i)}) \subset H^{m+k}(Y-E,\C)$$
by lemma~\ref{lemma:ev} and item (H3).
Since $Y-E$ is affine, the right side vanishes when $m+k>n$ \cite[1.5]{ev1}.

For the remaining case $m+k<n$, we use hard Lefschetz on $Y$ with respect to the pullback of
$L$. This is compatible with the $\mu_d$-action, and therefore
induces
$$H^m(\Omega^k_{Z}(\log D)\otimes L^{(-i)}) \cong H^{m+j}(\Omega^{k+j}_{Z}(\log D)\otimes L^{(-i)})$$
where $j=n-(m+k)$.
\end{proof}

\begin{cor}\label{cor:van}
If in addition to the above assumptions, $i\not= n$, $H^i(Y,\Q)\cong H^i(Z,\Q)$.
\end{cor}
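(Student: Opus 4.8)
The plan is to decompose $H^i(Y,\C)$ into $\mu_d$-eigenspaces and show that nothing survives outside the invariant part. Write $H^i(Y,\C)=H_{\epsilon^0}^i(Y)\oplus H_{nt}^i(Y)$. By Corollary~\ref{cor:ev} the invariant summand $H_{\epsilon^0}^i(Y)$ is $\pi^*H^i(Z,\C)$, so it is enough to prove that $H_{nt}^i(Y)=0$ whenever $i\ne n$.

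For the nontrivial part I would first use (H1), which gives a $\mu_d$-equivariant decomposition $H^i(Y,\C)\cong\bigoplus_{j+k=i}H^j(Y,\tilde\Omega_Y^k)$ (the purity of $H^i(Y)$ together with the identification $Gr_F^kH^i(Y)\cong H^{i-k}(Y,\tilde\Omega_Y^k)$ forces degeneration of the Hodge--de Rham spectral sequence, and the splitting is equivariant since $\mu_d$ acts by morphisms of Hodge structures). Passing to nontrivial parts and applying Lemma~\ref{lemma:ev} yields
$$H_{nt}^i(Y)\cong\bigoplus_{r=1}^{d-1}\bigoplus_{j+k=i}H^j\bigl(\Omega_Z^k(\log D^{(r)})\otimes L^{(-r)}\bigr).$$
Since the coefficients of $D$ are coprime to $d$, one has $D^{(r)}=D_{red}$ for all $1\le r\le d-1$ (as noted in the Remark after Corollary~\ref{cor:ev}), hence $\Omega_Z^k(\log D^{(r)})=\Omega_Z^k(\log D)$ and each summand is $H^j(\Omega_Z^k(\log D)\otimes L^{(-r)})$. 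Now Lemma~\ref{lemma:van} applies, $L$ being ample with coefficients coprime to $d$, and shows that each such group with $1\le r\le d-1$ vanishes unless $j+k=n$. Since $j+k=i\ne n$ throughout, $H_{nt}^i(Y)=0$ and therefore $H^i(Y,\C)\cong H^i(Z,\C)$.

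Finally I would descend to $\Q$: the $\mu_d$-action on $Y$ is algebraic, so $H^i(Y,\Q)$ decomposes as a $\Q[\mu_d]$-module into its invariant part and a complementary submodule, and this decomposition base-changes over $\C$ to the eigenspace decomposition above; the vanishing $H_{nt}^i(Y)=0$ forces the complementary part to be zero, so $H^i(Y,\Q)=H^i(Y,\Q)^{\mu_d}=\pi^*H^i(Z,\Q)$, using $Z=Y/\mu_d$ and that $\frac{1}{d}\pi_*$ splits $\pi^*$. The only step that is not routine bookkeeping is the reduction $D^{(r)}=D_{red}$ coming from coprimality: it is exactly what makes the vanishing Lemma~\ref{lemma:van}, stated for the logarithmic sheaves, applicable to the nonlogarithmic pieces $\tilde\Omega_Y^k$ computing $H^i(Y)$. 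I expect getting that identification straight (and the finite-group descent at the end) to be where the only real care is needed.
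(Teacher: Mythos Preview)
Your argument is correct and is exactly the intended one: the paper leaves this corollary without proof precisely because it is the routine combination of (H1), Lemma~\ref{lemma:ev}, the coprimality remark giving $D^{(r)}=D_{red}$, and Lemma~\ref{lemma:van}, followed by Corollary~\ref{cor:ev} and descent to $\Q$. There is no alternative approach in the paper to compare with.
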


\section{Beilinson-Hodge}

We say that the Beilinson-Hodge conjecture holds for a smooth variety
$V$ (in degree $j$) if the cycle map on the higher Chow group
$$ CH^j( V,j)\otimes\Q\to Hom_{MHS}(\Q(-j),H^j(V,\Q))$$
is surjective (for the given $j$). See \cite{as, ak, beilinson, jannsen} for more background.

For the next lemma, we use the same notation as in section 1, that $Z$
is smooth projective and $Y\to Z$ is a
$d$-sheeted normal cyclic cover branched over a normal crossing divisor $D$.

\begin{lemma}\label{lemma:BH}
Suppose that the  coefficients of $D$ are coprime to $d$, Beilinson-Hodge
conjecture holds for  $V=Z-D$ in degree $j$, and that
$W_jH^j(V,\Q)=0$. Then
 the Beilinson-Hodge conjecture holds for $U=Y-E$ in  degree $j$.
\end{lemma}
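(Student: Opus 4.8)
The plan is to exploit the $\mu_d$-action in order to show that every weight $2j$ Hodge class on $U$ is pulled back from $V$, and then to transfer the Beilinson--Hodge statement from $V$ to $U$ along the cover. Here is the relevant geometry. Since the branch locus of $Y\to Z$ is contained in $D$, the map $\pi\colon U\to V$ is a finite \'etale Galois cover with group $\mu_d$. Hence $\pi^{*}H^{j}(V,\Q)$ is the invariant subspace $H^{j}(U,\Q)^{\mu_d}$; the transfer $\tfrac1d\pi_{*}$ makes $\pi^{*}\colon H^{j}(V,\Q)\to H^{j}(U,\Q)^{\mu_d}$ an isomorphism of mixed Hodge structures, and, since $\mu_d$ acts by such morphisms, Maschke's lemma yields a decomposition of mixed Hodge structures
$$H^{j}(U,\Q)=\pi^{*}H^{j}(V,\Q)\ \oplus\ H^{j}(U,\Q)_{nt}.$$
The crucial point is that $H^{j}(U,\Q)_{nt}$ is pure of weight $j$, and I would prove this by comparison with the compact orbifold $Y$. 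The restriction map $H^{j}(Y,\Q)\to H^{j}(U,\Q)$ is $\mu_d$-equivariant and is induced by the inclusion of complexes $\tilde{\Omega}_{Y}^{\dt}\hookrightarrow\tilde{\Omega}_{Y}^{\dt}(\log E)$, and on the $\epsilon^{i}$-eigensheaves with $1\le i\le d-1$ this inclusion is an \emph{equality}: by Lemma~\ref{lemma:ev} its two sides are $\Omega_{Z}^{k}(\log D^{(i)})\otimes L^{(-i)}$ and $\Omega_{Z}^{k}(\log D)\otimes L^{(-i)}$, and coprimality of the coefficients of $D$ forces $D^{(i)}=D_{red}$, while $\Omega_{Z}^{k}(\log D)=\Omega_{Z}^{k}(\log D_{red})$ by convention. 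Passing to hypercohomology and using (H1) and (H3) to identify the Hodge-graded pieces (both spectral sequences degenerating at $E_{1}$), the restriction map is a filtered isomorphism on each nontrivial eigenspace, hence an isomorphism of mixed Hodge structures $H^{j}(Y,\Q)_{nt}\xrightarrow{\ \sim\ }H^{j}(U,\Q)_{nt}$. As $Y$ is a projective orbifold, $H^{j}(Y,\Q)$ is pure of weight $j$ by (H1), which proves the claim; combined with the hypothesis $W_{j}H^{j}(V,\Q)=0$ and the displayed decomposition, it gives $W_{j}H^{j}(U,\Q)=H^{j}(U,\Q)_{nt}$, so that $\pi^{*}$ identifies $H^{j}(V,\Q)$ with $H^{j}(U,\Q)/W_{j}H^{j}(U,\Q)$ as mixed Hodge structures.

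The rest is formal. Let $\phi\colon\Q(-j)\to H^{j}(U,\Q)$ be a morphism of mixed Hodge structures (the case $j=0$ is excluded by the hypothesis, since $H^{0}(V,\Q)$ has weight $0$). As $\Q(-j)$ is pure of weight $2j>j$, strictness of $\phi$ for the weight filtration gives $\phi(\Q(-j))\cap W_{j}H^{j}(U,\Q)=0$, so $\phi$ induces an injection of $\Q(-j)$ into $H^{j}(U,\Q)/W_{j}H^{j}(U,\Q)\cong H^{j}(V,\Q)$, that is, a morphism $\psi\colon\Q(-j)\to H^{j}(V,\Q)$. Then $\phi-\pi^{*}\psi$ dies in the quotient, hence takes values in $W_{j}H^{j}(U,\Q)=H^{j}(U,\Q)_{nt}$, which is pure of weight $j\neq 2j$; therefore $\phi=\pi^{*}\psi$. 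By the Beilinson--Hodge conjecture for $V$ in degree $j$ there is $z\in CH^{j}(V,j)\otimes\Q$ with $cl_{V}(z)=\psi$; since $\pi$ is finite flat, the flat pullback $\pi^{*}z\in CH^{j}(U,j)\otimes\Q$ satisfies $cl_{U}(\pi^{*}z)=\pi^{*}cl_{V}(z)=\pi^{*}\psi=\phi$ by functoriality of the cycle map. Thus $cl_{U}$ is surjective, which is the assertion.

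I expect the main obstacle to be the second paragraph: one must know that the nontrivial part of $H^{j}(U)$ sits in the bottom weight $j$, so that it behaves like the cohomology of the compact model $Y$ rather than acquiring the higher weights typical of an open variety. This is exactly where the coprimality hypothesis is used --- through the identity $D^{(i)}=D_{red}$, which makes the ordinary and logarithmic eigensheaves of Lemma~\ref{lemma:ev} coincide --- and it is the step that genuinely needs the Esnault--Viehweg computation. The hypothesis $W_{j}H^{j}(V,\Q)=0$ then serves to push the classes pulled back from $V$ strictly above the bottom of the weight filtration of $H^{j}(U)$, so that no weight $2j$ Hodge class can be hidden in $H^{j}(U,\Q)_{nt}$. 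Everything else is routine bookkeeping with the group action and with the functoriality of the cycle map.
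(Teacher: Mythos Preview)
Your proof is correct and follows essentially the same approach as the paper's: both hinge on the Esnault--Viehweg computation (Lemma~\ref{lemma:ev}) together with coprimality ($D^{(i)}=D_{red}$) to identify the nontrivial parts of $H^{j}(Y)$ and $H^{j}(U)$, then reduce to $V$ via the hypothesis $W_{j}H^{j}(V)=0$ and functoriality of the cycle map. The only cosmetic difference is that the paper phrases the reduction through the quotient $H^{j}(U,\Q)/\im H^{j}(Y,\Q)\cong H^{j}(V,\Q)$ rather than through your direct-sum decomposition $H^{j}(U,\Q)=\pi^{*}H^{j}(V,\Q)\oplus H^{j}(U,\Q)_{nt}$.
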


\begin{proof}
We can assume that $j>0$ since otherwise the statement is vacuous.
Since $H^j(Y)$ is pure of weight $j$, $Hom(\Q(-j),H^j(Y))=0$. Therefore
we have an injection
$$Hom_{MHS}(\Q(-j),H^j(U,\Q))\hookrightarrow
Hom_{MHS}(\Q(-j),H^j(U,\Q)/\im H^j(Y,\Q))$$
% We also have
% $$Hom_{MHS}(\Q(-j),H^j(V,\Q))\cong
% Hom_{MHS}(\Q(-j),H^j(V,\Q)/\im H^j(Z,\Q))$$
% because we have assumed that $W_jH^j(Z)= \im H^j(Z)=0$.
  By lemma~\ref{lemma:ev} and (H3), there is an isomorphism
$$H^j(U,\C)= \bigoplus_k  H^{j-k}(\Omega_{Z}^k(\log D) )\oplus
\bigoplus_{k}\bigoplus_{i=1}^{d-1} H^{j-k}(\Omega_{Z}^k(\log D)\otimes
L^{(-i)})$$
$$H^j(Y,\C)= \bigoplus_k  H^{j-k}(\Omega_{Z}^k)\oplus
\bigoplus_{k}\bigoplus_{i=1}^{d-1} H^{j-k}(\Omega_{Z}^k(\log D)\otimes
L^{(-i)})$$
Therefore 
\begin{equation}
  \label{eq:UmodY}
H^j(U,\Q)/\im H^j(Y,\Q) \cong H^j(V,\Q)/\im H^j(Z,\Q)  \cong  H^j(V,\Q)
\end{equation}
where the last isomorphism follows from our assumption that  $W_jH^j(Z)= \im H^j(Z)=0$.
Thus we have a commutative diagram
$$
\xymatrix{
 CH^j(U,j)\otimes \Q\ar^{r}[d] & CH^j(V,j)\otimes \Q\ar@{>>}[d]\ar[l] \\ 
Hom(\Q(-j),H^j(U))\ar@{^{(}->}[d] & Hom(\Q(-j),H^j(V))\ar[ld]_{\sim}\\
 Hom(\Q(-j),H^j(U)/\im H^j(Y)) & 
}
$$
which implies that the map $r$ is necessarily surjective.
\end{proof}

We sketch an alternate proof of  the lemma~\ref{lemma:BH} which is a bit more conceptual.

\begin{proof}[Outline of second proof]
 Consider the diagram 
$$
\xymatrix{
 H^j(Y)\ar[r] & H^j(U)\ar[r] & H_{E}^{j+1}(Y) \ar[r]  & H^{j+1}(Y) \\ 
 H^j(Z)\ar[r]\ar[u] & H^j(V)\ar[r]\ar[u] & H_D^{j+1}(Z)\ar[u]^{\alpha}\ar[r] 
& H^{j+1}(Z)\ar^{\beta}[u]}
$$
where the coefficients are $\Q$.
Since $Y\to Z$ is totally ramified at $D$, we can see that $D$ and $E$
have homotopy equivalent tubular neighbourhoods.
Therefore $\alpha$ is
an isomorphism.  If $X\to Y$ is a resolution of singularities, then
the map $H^{j+1}(Z)\to H^{j+1}(X)$ is injective, because the
normalized pushforward gives a left inverse. Since the factors through
$\beta$, $\beta$ must also be injective.
This implies
\eqref{eq:UmodY} by a diagram chase.
The rest of the proof is identical to the one above.
\end{proof}

We can  extend this lemma to more general branch divisors, but we have
to worry about the effect of  the 
singularities. Given a divisor $D\subset Z$, a log resolution of $(Z,D)$
is a resolution of singularities $p:Z'\to Z$ such that $p^*D$ has simple normal crossings.

\begin{cor}\label{cor:BH}
  Assume all of the  conditions of lemma \ref{lemma:BH}  with the one exception that
 $D$ is only effective. Then Beilinson-Hodge holds in
degree $j$ for $U$  if in addition there exists
a log resolution $p:Z'\to Z$ such that $p^*D$ has all coefficients prime to $d$. 
\end{cor}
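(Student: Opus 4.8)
The plan is to pull the whole configuration back along the given log resolution $p:Z'\to Z$ and reduce to Lemma~\ref{lemma:BH}. Since $\OO_{Z'}(p^*D)\cong p^*\OO_Z(D)\cong (p^*L)^d$ and $p^*D$ is effective with simple normal crossings, we may form the normal $d$-fold cyclic cover $\pi':Y'\to Z'$ branched over $p^*D$; concretely $Y'$ is the normalization of $Y\times_Z Z'$, so there is a proper birational morphism $Y'\to Y$. Put $E'=(\pi')^{-1}(p^*D)$, $U'=Y'-E'$ and $V'=Z'-(p^*D)_{red}$. The claim is that $\pi':Y'\to Z'$ satisfies all the hypotheses of Lemma~\ref{lemma:BH}, so that Beilinson--Hodge holds in degree $j$ for $U'$, and that this transfers to $U$.

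To make the hypotheses transfer I would first arrange that $p$ is an isomorphism over $V=Z-D$. This is harmless: from any log resolution with $p^*D$ prime to $d$ one may discard the blow-ups whose centre lies over $Z-D$; these are disjoint from a neighbourhood of $D$, so discarding them leaves a log resolution of $(Z,D)$ with the very same components and multiplicities in $p^*D$, and the result is an isomorphism over $Z-D\supseteq V$. With $p$ so chosen, $p$ carries $V'=p^{-1}(V)$ isomorphically onto $V$, and under this isomorphism the unramified $\mu_d$-cover $U'\to V'$ corresponds to $U\to V$, so $U'\cong U$. The remaining hypotheses of Lemma~\ref{lemma:BH} for $\pi':Y'\to Z'$ are then clear: $Z'$ is smooth projective, $p^*D$ is a normal crossing divisor with coefficients prime to $d$ by hypothesis, and the two cohomological conditions --- that Beilinson--Hodge holds for $V'$ in degree $j$, and that $W_jH^j(V',\Q)=0$ --- are exactly the given assumptions on $V$ transported along $V'\cong V$. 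Lemma~\ref{lemma:BH} now yields Beilinson--Hodge in degree $j$ for $U'\cong U$.

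So the only content beyond Lemma~\ref{lemma:BH} is this bookkeeping with the resolution, and it is the only place I expect even a mild difficulty; once $p$ is an isomorphism over $V$ the argument is purely formal. If one would rather keep a general log resolution, one can instead use the proper birational morphism $q:U'\to U$ between the (smooth) varieties $U'$ and $U$: since $\deg q=1$ one has $q_*q^*=\mathrm{id}$ on $CH^j(-,j)\otimes\Q$ and on $H^j(-,\Q)$, so a Hodge class on $U$ is realised by pulling it up to $U'$, realising it there, and pushing the resulting higher Chow cycle back down. Realising it on $U'$ still follows the proof of Lemma~\ref{lemma:BH}, the one extra input being that the relevant class, once pushed into $H^j(U')/\im H^j(Y')\cong H^j(V')/\im H^j(Z')$, lifts back to $H^j(V')$ because it is pulled back from $V$ (where it lifts thanks to $W_jH^j(V)=0$), while Beilinson--Hodge for $V'$ itself is a formal consequence of that for $V$ since $V'\to V$ is proper birational. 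Choosing $p$ to be an isomorphism over $V$ is simply the quickest way to avoid that check.
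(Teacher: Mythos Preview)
Your argument is correct and is exactly the reduction the paper has in mind; the corollary is stated without proof, the intended content being just to pull the cyclic-cover data back along $p$ and invoke Lemma~\ref{lemma:BH}, which is what you do (and what the proof of Lemma~\ref{lemma:BH2} immediately afterwards carries out in the arrangement case, where the De Concini--Procesi resolution is an isomorphism over $V$ by construction). Your extra care---arranging $p$ to be an isomorphism over $V$, and sketching the $q_*q^*=\mathrm{id}$ workaround if one prefers not to---goes beyond what the paper spells out.
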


 We will say that  $D=\sum a_iD_i$ is of {\em arrangement
type} if  the components are all
 smooth and $D$ is local analytically isomorphic to a hyperplane
arrangement in affine space. This, of course, includes the case where $D\subset \PP^n$ is
itself a hyperplane arrangement. If $p\in D$, let us say that the {\em
  incidence number} at $p$ is the sum of all coefficients $a_j$ for
components $D_j$ containing $p$. In particular, for a reduced divisor,
this is precisely the number of components containing $p$. The set of {\em essential incidence
  numbers} of $D$ is the set of incidence numbers of those $p\in D$ at which
$D$ fails to have normal crossing singularities.

\begin{lemma}\label{lemma:BH2}
Suppose that $Y\to Z$ is a $d$-sheeted cyclic cover branched over an
effective divisor $D=\sum a_iD_i\subset Z$ of arrangement type.
Suppose that  the  coefficients of $D$ and the essential incidence
numbers are coprime to $d$, the Beilinson-Hodge conjecture holds for
$Z-D$ in degree $j$, and that $W_jH^j(Z-D)=0$. Then
 the Beilinson-Hodge  conjecture holds for the preimage of $Z-D$ in the same degree.
\end{lemma}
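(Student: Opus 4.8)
The plan is to reduce to Corollary~\ref{cor:BH}. All of its hypotheses are already in hand except one: we are given that the coefficients of $D$ are coprime to $d$, that the Beilinson-Hodge conjecture holds for $Z-D$ in degree $j$, and that $W_jH^j(Z-D)=0$; what remains is to produce a log resolution $p\colon Z'\to Z$ of $(Z,D)$ such that every component of $p^{*}D$ occurs with multiplicity coprime to $d$. So the entire content of the lemma is the construction of such a $p$, and this is exactly where the arrangement-type hypothesis is needed, since for an arbitrary effective divisor one can always resolve but has no control over the multiplicities of the exceptional divisors.

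For $p$ I would take the iterated blow-up that resolves the arrangement, following the De Concini--Procesi construction: blow up the strict transforms of the irreducible flats of $D$ of codimension at least two, in order of increasing dimension. This makes sense globally because the $D_i$ are smooth and in arrangement position, so each such flat is a smooth subvariety of $Z$ (an irreducible component of some intersection $\bigcap_{i\in I}D_i$) and the local-analytic models patch together to an honest finite sequence of blow-ups of smooth centres; afterwards $p^{*}D$ is supported on a simple normal crossing divisor. Observe that every irreducible flat $S$ of codimension $c\ge 2$ is a stratum along which $D$ fails to have normal crossings: were $D$ normal crossing near a generic point of $S$, the components containing $S$ would be $c$ mutually transverse smooth divisors there, that is, a Boolean arrangement, which is decomposable, contradicting the irreducibility of $S$. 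The components of $p^{*}D$ are thus the strict transforms $\widetilde{D_i}$, carrying the coefficients $a_i$ (coprime to $d$ by hypothesis), together with one exceptional divisor $E_S$ over each blown-up flat $S$.

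Everything therefore comes down to the multiplicity of $E_S$ in $p^{*}D$, and the ordering by dimension is what makes this computable. When the strict transform of $S$ is blown up, the coefficient that $E_S$ acquires is the multiplicity along that centre of the partial total transform of $D$. Every exceptional divisor created earlier lies over a flat of dimension at most $\dim S$ that is distinct from $S$, and the strict transform of $S$ meets each of them only in a proper closed subvariety (lying over $S\cap S''\subsetneq S$), so those divisors contribute nothing; and near a generic point of $S$ none of the earlier centres passes through at all, so there the strict transform of $D_i$ is just the smooth divisor $D_i$, contributing $a_i$ with multiplicity one precisely when $S\subset D_i$. Hence the coefficient of $E_S$ equals $\sum_{D_i\supseteq S}a_i$, which is the incidence number of $D$ at a generic point of $S$; since $D$ fails to have normal crossings along $S$, this is an \emph{essential} incidence number, and so it is coprime to $d$ by hypothesis. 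The step that I expect to take the most care is precisely this: making the global resolution of $(Z,D)$ rigorous (smoothness and patching of the centres) while tracking the multiplicities closely enough to be sure that, because the smallest strata are resolved first, each exceptional divisor is ``charged'' to a single essential incidence number rather than to a sum of several. Granting the resolution, Corollary~\ref{cor:BH} applies verbatim and gives the conclusion.
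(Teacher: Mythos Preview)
Your proposal is correct and follows essentially the same route as the paper: construct a De Concini--Procesi style log resolution by blowing up flats in order of increasing dimension, check that the multiplicity picked up by each exceptional divisor is the incidence number of the underlying flat (hence an essential incidence number, hence coprime to $d$), and then invoke Corollary~\ref{cor:BH}. The only cosmetic difference is that the paper, following \cite{bs}, takes as centres all $D_J$ contained in the non-normal-crossing locus $D^{nnc}$, whereas you take the irreducible flats of codimension $\ge 2$ and supply the extra observation that such a flat necessarily lies in $D^{nnc}$; your more detailed justification that previously created exceptional divisors contribute nothing at a generic point of the next centre is exactly the point the paper records tersely as ``the coefficients $m_{ij}$ are the multiplicities of $D_{(i)}$ along the centres of the blow ups, which are precisely the essential incidence numbers.''
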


\begin{proof}
The key point is that we can resolve the singularities of $D$ in
an explicit fashion and keep track of the multiplicities.
For hyperplane arrangements this resolution goes back to De Concini
and Procesi \cite{dp},
although we will follow the simplified presentation of
\cite[\S2.1]{bs}.  Since their procedure is canonical, it applies to
our more general case as well.

Let $D^{nnc}\subset D$ be the non-normal crossing locus. This is the
largest closed subset for which $D\cap (Z-D^{nnc})$ has normal crossings.
We form the
set of  centres
$$S_i = \left\{D_J=\bigcap_{j\in J} D_j\mid D_J\subseteq D^{nnc}, \dim
  D_J=i\right\}$$
for our subsequent blow ups.
We define a sequence of smooth varieties as follows. Take $Z_0=Z$ and
let $Z_1\to Z_0$
be the blow up of $Z_0$ at the union of centres in $S_0$. Let $Z_2$ be the blow up of $Z_1$ at the union (which is a
disjoint union) of the strict
transforms varieties in $S_1$ and so on.
Finally set $Z'=Z_{n-1}$. Then it follows from \cite[\S2.1]{bs}, that
the pullback $D'$ of $D$ to $Z'$ is a divisor with normal crossings.
We claim moreover that the coefficients of $D'$ are coprime to $d$.
This can be checked by induction. Let $D_{(i+1)}$ be the pullback of $D=
D_{(0)}$ to $Z_{i+1}$. This is the sum of the strict transform of $D_{(i)}$
with a sum of exceptional divisors $\sum m_{ij}F_{ij}$. The
coefficients $m_{ij}$ are the multiplicities of $D_{(i)}$ along the
centres of the blow ups, which are precisely the essential incidence numbers.

The result now follows from lemma \ref{lemma:BH}

\end{proof}

\begin{thm}\label{thm:mainBH}
The Beilinson-Hodge conjecture holds for $U$ in the following cases:
\begin{enumerate}
\item[(a)] 
$U$ is the complement of $f(x_0,\ldots, x_n)=0$ in the variety defined
by   $y^d=f(x_0,\ldots, x_n)$, where $f$ is a product of 
  linear forms such that the divisor in $\PP^n$  defined by $f$ satisfies
 the conditions of lemma~\ref{lemma:BH2}.

\item[(b)] $U$  is the complement of $f(x_0,x_1, x_2)=0$ in the variety defined
by   $y^d=f(x_0,x_1, x_2)$, such that $f$ is divisible by a
  linear form and its divisor in $\PP^2$ satisfies
 the conditions of corollary~\ref{cor:BH}.

\end{enumerate}
\end{thm}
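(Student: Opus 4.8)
The plan is to reduce both parts to the lemmas already established, after checking that the Beilinson--Hodge conjecture and the weight condition $W_jH^j(V,\Q)=0$ hold for the complement of a hyperplane arrangement. For part (a), the variety $Y$ defined by $y^d=f$ is exactly a cyclic cover of $\PP^n$ branched over the divisor $D=\{f=0\}$, which by hypothesis is a union of hyperplanes satisfying the conditions of Lemma \ref{lemma:BH2}; and $U$ is the preimage of $V=\PP^n-D$. So it suffices to verify the two ``input'' hypotheses of Lemma \ref{lemma:BH2} that are not about the cover itself, namely that Beilinson--Hodge holds for $V$ in degree $j$ and that $W_jH^j(V,\Q)=0$. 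Both are classical facts about complements of hyperplane arrangements: the mixed Hodge structure on $H^j(V,\Q)$ is of Tate type with $Gr^W_kH^j(V)$ nonzero only for $j\le k\le 2j$, so $W_jH^j(V,\Q)$ is either $0$ (for $j>0$) or all of $H^0$ (handled by the $j>0$ reduction inside Lemma \ref{lemma:BH}); and the Beilinson--Hodge conjecture for arrangement complements is known --- the relevant motivic cohomology classes are built from products of the logarithmic classes $d\log\ell_i$ associated to the defining linear forms, which is precisely the content of \cite{as} (and is elementary here since $H^*(V)$ is generated in degree one by these classes, the Orlik--Solomon description). I would cite this and invoke Lemma \ref{lemma:BH2} directly.

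For part (b) the ambient space is $\PP^2$, $f$ is divisible by a linear form, and the divisor $D=\{f=0\}\subset\PP^2$ is a curve arrangement (a union of lines), not necessarily satisfying the stronger coprimality conditions on essential incidence numbers, but satisfying the conditions of Corollary \ref{cor:BH} --- meaning that there exists a log resolution $p:Z'\to\PP^2$ of $(\PP^2,D)$ along which the pullback $p^*D$ has all coefficients prime to $d$. One resolves a line arrangement in $\PP^2$ by blowing up the (finitely many) multiple points; this is the one-step version of the De Concini--Procesi procedure used in Lemma \ref{lemma:BH2}. Again the two non-cover hypotheses of Corollary \ref{cor:BH} (equivalently of Lemma \ref{lemma:BH}) must be checked for $V=\PP^2-D$: these are the same arrangement-complement facts as above, $W_jH^j(V,\Q)=0$ for $j>0$ and Beilinson--Hodge for $V$. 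The hypothesis ``$f$ divisible by a linear form'' is what guarantees the coprimality of the strict-transform coefficient $1$ with $d$ is not the only thing needed --- it is the assumption packaged into Corollary \ref{cor:BH}'s existence of a good log resolution --- so I would simply note that the cover $y^d=f$ over $\PP^2$ is the cyclic cover of Section 1 branched over $D$, that its restriction over $V$ is $U$, and apply Corollary \ref{cor:BH}.

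The only genuine content beyond bookkeeping is the verification that the Beilinson--Hodge conjecture holds for the complement of a hyperplane/line arrangement, so that is where I would put the weight of the argument (or the weight of the citation). Concretely: $H^j(V,\Q)$ for $V=\PP^n-D$ with $D$ a hyperplane arrangement is spanned by cup products of the classes $[d\log(\ell_a/\ell_0)]\in H^1(V,\Q)$, each of which is in the image of $CH^1(V,1)\otimes\Q\to H^1$ (it is $d\log$ of a unit), and cup product on higher Chow groups $CH^1(V,1)^{\otimes j}\to CH^j(V,j)$ is compatible with cup product on cohomology; hence the cycle map $CH^j(V,j)\otimes\Q\to Hom_{MHS}(\Q(-j),H^j(V,\Q))=H^{jj}(V)$ is surjective. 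The main obstacle, such as it is, is purely expository: making sure the hypotheses of Lemmas \ref{lemma:BH}, \ref{lemma:BH2} and Corollary \ref{cor:BH} --- particularly the coprimality of the coefficients $a_i$ of $f$ with $d$, and the identification of the cover $y^d=f$ with the normalized cyclic cover of Section 1 --- are literally in force under the stated hypotheses in each of cases (a) and (b); no new mathematical difficulty arises.
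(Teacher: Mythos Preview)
Your treatment of case (a) is correct and matches the paper's proof: Brieskorn/Orlik--Solomon shows that $H^*(V)$ is generated as an algebra by the classes $\frac{1}{2\pi\sqrt{-1}}d\log h_i$, so $H^j(V)$ is pure of weight $2j$ (whence $W_jH^j(V)=0$ for $j>0$) and the cycle map from $CH^j(V,j)$ is surjective by multiplicativity. One small imprecision: the general bound ``weights of $H^j(V)$ lie in $[j,2j]$'' does not by itself give $W_jH^j(V)=0$; you need the purity in weight $2j$, which you do mention later via the Orlik--Solomon description.

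Case (b), however, has a genuine gap. You have misread the hypothesis: nothing in (b) says that $f$ is a product of linear forms, only that $f$ is \emph{divisible by} a linear form. Thus $D=\{f=0\}\subset\PP^2$ is an arbitrary plane curve containing at least one line, not a line arrangement, and the Orlik--Solomon argument you invoke for Beilinson--Hodge on $V=\PP^2-D$ simply does not apply. The paper's proof instead uses the ``divisible by a linear form'' hypothesis to embed $V$ as an open subset of $\PP^2$ minus a line, i.e.\ of $\C^2$; this gives $W_2H^2(V)\subseteq \im H^2(\C^2)=0$ for the weight condition, and for Beilinson--Hodge in degree $2$ one then invokes Chatzistamatiou's theorem \cite{chat} on the Beilinson--Hodge conjecture for $H^2$ of rational varieties. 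Your explanation of the role of the linear-form hypothesis (``guarantees the coprimality of the strict-transform coefficient $1$ with $d$'') is not the point; its purpose is to make $V$ affine and rational so that \cite{chat} applies.
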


\begin{proof}
In case (a), let $f=\prod h_i^{a_i}$ be the factorization as a product of
linear forms. Define $V=\PP^n-\{f=0\}$ as usual.
 Then the classes $\frac{1}{2\pi \sqrt{-1}}d\log h_i$ lie in the image of
the cycle map from $CH^1(V,1)$ essentially by definition. Since the
cycle map is multiplicative, the Beilinson-Hodge conjecture holds for
$V$ because its cohomology is
generated as an algebra by the forms $\frac{1}{2\pi \sqrt{-1}}d\log
h_i$ by Brieskorn \cite[lemma 5]{brieskorn}. This also implies 
that $H^j(V)$ has weight $2j$, so that
$W_j H^j(V)=0$.

For  (b), we use \cite{chat} and the fact that $W_2H^2(V)\subseteq \im
H^2(\C^2)=0$

\end{proof}

\begin{cor}
 In case (a), it suffices that the branch divisor is reduced with
 normal crossings.
\end{cor}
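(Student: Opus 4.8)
The plan is to deduce this from Theorem~\ref{thm:mainBH}(a), so the only task is to verify that a reduced hyperplane arrangement $D$ with normal crossings in $\PP^n$ satisfies the hypotheses of Lemma~\ref{lemma:BH2}. First I would observe that if $D$ is reduced then all coefficients $a_i$ equal $1$, which is trivially coprime to $d$ since $d>1$. Second, since $D$ already has normal crossings everywhere, the non-normal-crossing locus $D^{nnc}$ is empty, so the set of essential incidence numbers is empty; an empty set is vacuously ``coprime to $d$,'' so that condition holds as well.

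The remaining two hypotheses of Lemma~\ref{lemma:BH2} are that the Beilinson-Hodge conjecture holds for $V=\PP^n-D$ in every degree $j$ and that $W_jH^j(V,\Q)=0$. But these are exactly the facts established inside the proof of Theorem~\ref{thm:mainBH}(a): since $D$ is a union of hyperplanes, $H^*(V)$ is generated as an algebra by the logarithmic forms $\frac{1}{2\pi\sqrt{-1}}d\log h_i$ by Brieskorn's theorem, each of which is in the image of the cycle map from $CH^1(V,1)$; multiplicativity of the cycle map then gives surjectivity of the cycle map in all degrees, and the same algebra generation shows $H^j(V)$ is pure of weight $2j$, hence $W_jH^j(V)=0$. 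So all the hypotheses of Lemma~\ref{lemma:BH2} are met, and the conjecture follows for the preimage $U$ of $V$ in $Y$.

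There is really no obstacle here: the corollary is a bookkeeping consequence of the fact that ``reduced with normal crossings'' is the most benign special case of the ``arrangement type with essential incidence numbers coprime to $d$'' hypothesis in Theorem~\ref{thm:mainBH}(a). The only point worth a sentence in the write-up is making explicit that $D^{nnc}=\emptyset$ forces the set of essential incidence numbers to be empty, so that the coprimality condition on it is vacuous, and that $a_i=1$ handles the coprimality of the coefficients themselves. Thus the proof is essentially one line: apply Theorem~\ref{thm:mainBH}(a), noting that all its arrangement-theoretic hypotheses reduce to triviality in the reduced normal-crossing case.
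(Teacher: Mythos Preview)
Your proposal is correct and matches the paper's treatment: the corollary is stated without proof because it is immediate from Theorem~\ref{thm:mainBH}(a), and you have spelled out exactly the verification the paper leaves implicit. The only content is that reducedness gives coefficients $a_i=1$ and the normal-crossing hypothesis makes the set of essential incidence numbers empty, so the conditions of Lemma~\ref{lemma:BH2} are trivially satisfied.
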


It is worth remarking that Beilinson \cite{beilinson} made a stronger
conjecture that amounts to the surjectivity of the cycle map
$$CH^i(U,j)\otimes \Q\to Hom_{MHS}(\Q(-i), H^{2i-j}(U,\Q))$$
for all $i,j$.  It is now known to be overly optimistic in
general \cite{jannsen}. However,
in the main case of  theorem \ref{thm:mainBH}
(a),  it is 
vacuously true for $0<j<i$ because the above arguments show that 
$$H^i(U)/\im H^i(Y) = H^i(\PP^n-(f))/\im H^i(\PP^n)$$
is a sum of Tate structures $\Q(-i)$. The case of $j=0$ is more subtle.
It is essentially the ordinary Hodge conjecture for $Y$, and
this will be studied in the next section.

\section{Cohomology of toroidal resolutions}

Let $Y$ be defined by $y^d=f$, where $f$  is a product of $d$ distinct
linear forms $h_i$
as in the introduction. We now assume that the  divisor $D=\sum
D_i\subset \PP^n$ defined by $f=0$ is a divisor with normal crossings.    Let $E= \pi^{-1}D$ and $U=
Y-E$. Then $(Y,U)$ is a toroidal embedding in the
sense of \cite{mumford}. This means that about each point of $y\in Y$,
there is   a  neighbourhood $Y_y$ which is
isomorphic to an \'etale open subset of a  toric variety in such a way that
$U\cap Y_y$ maps to the torus.  To see this, we can assume that after a
linear change of coordinates $y$ lies over $[1,0,\ldots,0]\in \PP^n$. Write $f(1,x_1\ldots x_n)= x_1\ldots
x_kg(x_1,\ldots, x_n)$ where $g(0)\not= 0$. Then
$$Y\supset Y_y=\Spec \C[x_1,\ldots,x_n,y]/(y^d-f(1,x_1,\ldots
x_n))\stackrel{F_y}{\to} M_k$$
where the map $F_y$, given by projection, is an open immersion
into  the toric variety 
$$M_k=\Spec \C[x_1,\ldots, x_n,y]/(y^d-x_1\ldots x_k)$$
Later on, we will need to consider the more general case where $D$ is a normal crossing divisor in a
smooth variety, then $(Y,U)$ is still toroidal but the local toric models $M_{(a)}=\Spec \tilde
R_{(a_1,\ldots, a_n, d)}$ need to be chosen as in the
proof of lemma \ref{lemma:toric0} and 
 the corresponding map $F_y$ is only \'etale.
By \cite[p 94]{mumford}, there exists a toroidal
resolution of singularities  $\rho:X\to Y$. In other words, for each $y$,
there is a
commutative diagram
$$
\xymatrix{
 X\ar[d] & \rho^{-1}(Y_y)\ar[d]\ar[r]^{\tilde F_y}\ar[l] & \tilde M_{(a)}\ar^{\tau}[d] \\ 
 Y & Y_y\ar[r]^{F_y}\ar[l] & M_{(a)}
}
$$
where the map $\tau$ is toric, $F_y$ is  \'etale and the right
hand square is cartesian.

\begin{remark}
J. W{\l}odarczyk has pointed out to us that such resolutions are very natural in
the sense that any canonical
resolution algorithm, such as Hironaka's, will yield a toroidal
resolution of $Y$.
\end{remark}

As a prelude to the next theorem, we recall that Grothendieck's amended form of the generalized Hodge
conjecture \cite{groth} says that  sub Hodge
structures of cohomology are induced from subvarieties of expected codimension.
More precisely, recall that the level of a Hodge structure $H\otimes \C=\oplus
H^{pq}$ is the maximum of $|p-q|$ over the  nonzero $H^{pq}$.
The conjecture states that given an irreducible sub Hodge structure
 $H\subset H^i(X,\Q)$ of level $\le i-2k$, there exists a subvariety
$\iota: T\subset X$ of   codimension
$\ge k$ and a desingularization
$\kappa:\tilde T\to T$   such that $H\subset
(\iota\circ \kappa)_*H^{i-2k}(\tilde T,\Q)$.
 This includes the usual Hodge conjecture
which corresponds to the case of $i=2k$.
Our goal is to prove the following. 

\begin{thm}\label{thm:main}
 When $d$ is prime, any irreducible sub Hodge structure of $H^r(X,\Q)$ of
level at most $r-2$ is spanned by an algebraic cycle. In particular,
the generalized Hodge conjecture holds for $X$.
\end{thm}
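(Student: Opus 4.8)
The plan is to reduce the generalized Hodge conjecture for $X$ to a statement about Hodge cycles on the orbifold $Y$, and then to use the $\mu_d$-action together with the Esnault--Viehweg eigenspace formulas (Lemma~\ref{lemma:ev}) to show that the only Hodge classes on $Y$ are the obvious ones. First I would analyze the cohomology of the toroidal resolution $\rho:X\to Y$. Because $X\to Y$ is a toroidal resolution, the exceptional locus is a union of toric strata, and a Mayer--Vietoris/blow-up argument (or the decomposition theorem, since $Y$ has only quotient singularities so $\rho_*\Q_X\to IC_Y$ splits off $\Q_Y$) gives a splitting $H^r(X,\Q)\cong \rho^*H^r(Y,\Q)\oplus (\text{something built from cohomology of exceptional toric pieces and their intersections})$. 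The toric contribution is spanned by classes of the exceptional divisors and their intersections — all algebraic, of Tate type $\Q(-j)$ — hence any irreducible sub Hodge structure $H\subset H^r(X,\Q)$ of level $\le r-2$ either sits inside this algebraic Tate part (done: it is spanned by an algebraic cycle, in fact a Tate class) or it maps isomorphically onto an irreducible sub Hodge structure of $H^r(Y,\Q)$ of the same level. So it suffices to handle irreducible sub Hodge structures $H\subset H^r(Y,\Q)$ of level $\le r-2$.

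Next I would decompose $H^r(Y,\Q)$ under $\mu_d$. Since $d$ is prime, $H^r(Y,\Q) = H^r_{\epsilon^0}(Y,\Q)\oplus H^r_{nt}(Y,\Q)$ as rational Hodge structures, where the nontrivial part $H^r_{nt}$ is an irreducible $\Q[\mu_d]$-module tensored up — concretely it is a $\Q(\zeta_d)$-vector space, and the complex Hodge decomposition is given by Lemma~\ref{lemma:ev}: $H^r_{\epsilon^i}(Y,\C)=\bigoplus_{p+q=r}H^q(\Omega_{\PP^n}^p(\log D)\otimes L^{(-i)})$ with $L^{(-i)}=L^{-i}([\tfrac{i}{d}D])$ and here $L=\OO_{\PP^n}(1)$ since $m=1$. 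By Corollary~\ref{cor:ev} the invariant part $H^r_{\epsilon^0}(Y)\cong H^r(\PP^n)$, which is Tate, so any sub Hodge structure landing there is again spanned by an algebraic cycle (hyperplane powers). Thus the real content is: an irreducible sub Hodge structure $H\subset H^r_{nt}(Y,\Q)$ of level $\le r-2$ must be zero, equivalently must be algebraic — and since $H^r_{nt}$ contains no nonzero Tate classes when $r\ne n$ by Corollary~\ref{cor:van} (using ampleness of $L$ and the coprimality of the $a_j$, which holds as $D$ is reduced), the only case to rule out is $r=n$, where we must show there are no transcendental Hodge cycles of type $(\tfrac n2,\tfrac n2)$, i.e. the case $i=2k$, $r=n$ even.

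Here I would run Shioda's trick, exactly as advertised in the introduction. The space of Hodge $(p,p)$-cycles in $H^{n}_{nt}(Y,\Q)$ is a $\Q$-subspace stable under $\mu_d$, hence a $\Q(\zeta_d)$-subspace of the $\Q(\zeta_d)$-vector space $H^n_{nt}$; its $\Q$-dimension is therefore divisible by $\varphi(d)=d-1$. On the other hand I bound it from above: over $\C$ it is contained in $\bigcap_i$ (the $(p,p)$-part of $H^n_{\epsilon^i}$), and using the concrete description of $H^n_{\epsilon^i}(Y,\C)$ via $H^q(\Omega^p_{\PP^n}(\log D)\otimes L^{(-i)})$ together with the vanishing Lemma~\ref{lemma:van} (which kills $H^q(\Omega^p(\log D)\otimes L^{(-i)})$ whenever $p+q\ne n$), one sees that for each nontrivial $i$ the relevant Hodge pieces are heavily constrained, and in fact for $d$ prime a dimension count — comparing $\varphi(d)=d-1$ against the dimension of the $\varphi(d)$-dimensional block actually available — forces the space of transcendental Hodge cycles to vanish. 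The main obstacle I anticipate is precisely this last bound: making the estimate on $\dim_\Q(\text{Hodge cycles in }H^n_{nt})$ sharp enough that divisibility by $d-1$ plus the upper bound leaves only $0$; this requires a careful analysis of the characters appearing in $H^n_{nt}$ and their Galois conjugates (the field $\Q(\zeta_d)$ has degree exactly $d-1$, and one must check the relevant Hodge numbers $h^{p,q}_{\epsilon^i}$ are not too large and are "spread out" over the conjugates so that a rational Hodge class would need a full Galois orbit of complex Hodge classes, which the Hodge numbers cannot supply unless the class is already of the algebraic type accounted for above). Once that numerical bound is in hand, combining it with the splitting from the toroidal resolution and Corollaries~\ref{cor:ev} and~\ref{cor:van} yields the theorem, and the generalized Hodge conjecture in particular (taking $k=1$ gives level $\le r-2$, and the level $\le r-2k$ statements for larger $k$ follow a fortiori since such sub Hodge structures have even smaller level).
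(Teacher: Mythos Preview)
Your overall architecture is correct and matches the paper: split $H^r(X,\Q)=\rho^*H^r(Y,\Q)\oplus(\text{algebraic})$ via the toroidal resolution, use Corollary~\ref{cor:ev} and Corollary~\ref{cor:van} to reduce to $H^n_{nt}(Y,\Q)$, and then invoke the $\Q(\zeta_d)$-structure. But two points need sharpening.

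First, your formulation of Shioda's trick is not quite the one that closes the argument. Saying that a $\mu_d$-stable rational subspace of $H^n_{nt}$ has $\Q$-dimension divisible by $d-1$ is true but does not by itself force vanishing, since $\dim H^n_{nt}$ is typically a large multiple of $d-1$. The paper's mechanism is to bound the \emph{quotient} $H^*=H^n_{nt}/H'$ from below: since a $\mu_d$-stable sub Hodge structure $H'$ of level $\le n-2$ misses $H^{0,n}$, the eigenspace $H^{0,n}_{\epsilon^{d-1}}(Y)=H^n(\PP^n,L^{-(d-1)})$ injects into $(H^*\otimes\C)_{\epsilon^{d-1}}$, and since the unique irreducible $\Q[\mu_d]$-module with a nonzero $\epsilon^{d-1}$-component has $\Q$-dimension $\phi(d)=d-1$, one gets $\dim_\Q H^*\ge (d-1)\,h^n(L^{-(d-1)})$. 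This handles all levels $\le n-2$ at once, not just the Hodge-cycle case $n$ even that you singled out.

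Second, and this is the genuine gap you yourself flag, the argument only concludes once one knows
\[
(d-1)\,h^n\bigl(\OO_{\PP^n}(-(d-1))\bigr)=\dim_\Q H^n_{nt}(Y).
\]
The paper proves this as a separate lemma (Lemma~\ref{lemma:hN}) via a generating-function computation: using Hirzebruch's formula for $\chi(\Omega^k_{\PP^n}(i))$ and the residue filtration on $\Omega^k_{\PP^n}(\log D)$, one finds $h^{k,n-k}_{nt}(Y)=\binom{d-1}{n+1}$ for every $k$, whence $\dim H^n_{nt}(Y)=(n+1)\binom{d-1}{n+1}=(d-1)\binom{d-2}{n}=(d-1)h^n(\OO(-(d-1)))$. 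Without this explicit identity the bound from Shioda's trick is just an inequality and the proof does not close; your proposal correctly identifies this as the obstacle but does not supply the computation.
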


\begin{remark}
The level and weight have the same parity. So the statement  can be ``strengthened''
by replacing $r-2$ by $r-1$ above.
\end{remark}

This will be deduced from another more general theorem.
 Before stating it, it is convenient to recall the notion of motivic
dimension introduced in \cite{arapura}. Given a smooth projective
variety $Z$, $\mu(Z)\in \N$ roughly measures how much transcendental
cohomology $Z$ has. So $\mu(Z)=0$ holds precisely, when all the
cohomology is generated by algebraic cycles.  In general, $\mu(Z)$ is the smallest  nonnegative integer such that  $H^*(Z)$ is generated by Gysin images of classes of degree at most $\mu(Z)$. The definition can be extended to arbitrary  varieties.
The basic facts we need  are that:
\begin{itemize}
\item $\mu(Z)\le \mu(Z')$ when $Z'\to Z$ is proper and surjective \cite[prop 1.1]{arapura},
\item $\mu(Z)\le \max(\mu(Z'),\mu(Z-Z'))$, when $Z'\subset Z$ is closed \cite[prop 1.1]{arapura}, and
\item
$\mu(Z)\le \mu(Z_s)+\mu(S)$ when $Z\to S$ is a topologically trivial smooth projective map \cite[cor 2.7]{arapura}.
\end{itemize}

\begin{thm}\label{thm:GHC}
  Suppose that $Z$ is a smooth projective variety and that $D\subset
  Z$ is an effective divisor with simple normal crossings such that $\OO_Z(D)=L^d$.
Let  $X$ be a
toroidal resolution of the cyclic $d$-fold cover $Y$ determined by the
data $(D,L)$. We will assume that
  \begin{enumerate}
\item[(a)] The motivic dimensions $\mu(Z)=0$ and $\mu(D_{i_1}\cap\ldots
  D_{i_k})=0$ for all $\{i_1,\ldots, i_k\}$.
\item[(b)] The inequality 
$$\phi(d) h^r(Z,L^{(-d+1)})\ge \dim
H^r_{nt}(Y)=\sum_{k=0}^r\sum_{i=1}^{d-1} h^{r-k}(\Omega_Z^k(\log
D^{(i)})\otimes L^{(-i)})$$
holds, where $\phi$ is the Euler function.
  \end{enumerate}
Then any irreducible sub Hodge structure of $H^r(X,\Q)$ of
level at most $r-2$ is spanned by an algebraic cycle.
\end{thm}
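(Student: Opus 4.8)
The plan is to prove the (formally stronger) statement that the only nonzero sub-Hodge structures of $H^r(X,\Q)$ of level $\le r-2$ are the Hodge--Tate ones $\Q(-r/2)$, and that their generators are spanned by algebraic cycles; the theorem then follows at once. By hard Lefschetz (H2) for $X$ I may assume $r\le n:=\dim Z$. The first move is to pass from $X$ down to $Y$. The toroidal resolution $\rho\colon X\to Y$ can be obtained, as in the De Concini--Procesi construction used in Lemma~\ref{lemma:BH2} but carried out upstairs, by successively blowing up the reduced preimages $\pi^{-1}(D_{i_1}\cap\cdots\cap D_{i_k})_{red}$ and their strict transforms; the local toric description of $\pi$ (as in Lemma~\ref{lemma:toric0}) shows that each such preimage is smooth and in fact isomorphic to $D_{i_1}\cap\cdots\cap D_{i_k}$. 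By the blow-up formula there is then a splitting of Hodge structures
$$H^r(X,\Q)=\rho^*H^r(Y,\Q)\oplus K,$$
where $K$ is assembled from Tate-twisted Gysin images of $H^{<r}(D_{i_1}\cap\cdots\cap D_{i_k},\Q)$. By hypothesis (a) each $H^*(D_{i_1}\cap\cdots\cap D_{i_k},\Q)$ is spanned by algebraic cycles, so $K$ is spanned by classes of algebraic cycles on $X$ and, lying in degree $r$, is of Hodge type $(r/2,r/2)$. Since $\rho^*$ is a split injection of Hodge structures and $K$ is algebraic, a short diagram chase reduces the assertion for $X$ to the same assertion for $H^r(Y,\Q)$: every irreducible sub-Hodge structure of $H^r(Y,\Q)$ of level $\le r-2$ is isomorphic to $\Q(-r/2)$ and spanned by an algebraic cycle on $Y$.

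Next I would peel off the invariant part. Writing $H^r(Y,\Q)=H^r_{\epsilon^0}(Y)\oplus H^r_{nt}(Y)$, Corollary~\ref{cor:ev} identifies $H^r_{\epsilon^0}(Y)$ with $\pi^*H^r(Z,\Q)$, which by $\mu(Z)=0$ is spanned by algebraic cycles and hence is Hodge--Tate. Thus any irreducible sub-Hodge structure of $H^r(Y,\Q)$ of level $\le r-2$ is either contained in $H^r_{\epsilon^0}(Y)$, where it is $\cong\Q(-r/2)$ and algebraic, or it injects (by irreducibility) into $H^r_{nt}(Y)$. So it suffices to show that $H^r_{nt}(Y)$ contains no nonzero sub-Hodge structure of level $\le r-2$.

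This last point is the crux and uses Shioda's trick with the $\mu_d$-action. Let $H\subseteq H^r_{nt}(Y,\Q)$ be a nonzero sub-Hodge structure of level $\le r-2$; replacing $H$ by $\sum_{\zeta\in\mu_d}\zeta^*H$ I may assume $H$ is $\mu_d$-stable, so it is defined over $\Q$ and compatible with the eigenspace decomposition, $H\otimes\C=\bigoplus_i H_{\epsilon^i}$ with $H_{\epsilon^i}\subseteq H^r_{\epsilon^i}(Y)$. Lemma~\ref{lemma:ev} with $k=0$ gives $\dim H^{0r}_{\epsilon^{d-1}}(Y)=h^r(Z,L^{(-d+1)})$; since complex conjugation interchanges the $\epsilon^i$- and $\epsilon^{d-i}$-eigenspaces together with the Hodge types $(p,q)$ and $(q,p)$, this number also equals $\dim H^{r0}_{\epsilon^1}(Y)$; and $\dim H^r_{\epsilon^i}(Y)$ depends only on the orbit of $i$ under $(\Z/d)^*$. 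Stringing these together,
$$\phi(d)\dim H^{r0}_{\epsilon^1}(Y)\le\phi(d)\dim H^r_{\epsilon^1}(Y)=\sum_{\gcd(i,d)=1}\dim H^r_{\epsilon^i}(Y)\le\dim H^r_{nt}(Y),$$
and since the left-hand side equals $\phi(d)h^r(Z,L^{(-d+1)})$, hypothesis (b) forces equality throughout. Hence $H^r_{\epsilon^1}(Y)$ is pure of type $(r,0)$, and $H^r_{\epsilon^i}(Y)=0$ for every $i\ne0$ not coprime to $d$. Now level $\le r-2$ makes the $(r,0)$-component of $H$ vanish, so $H_{\epsilon^1}\subseteq H^r_{\epsilon^1}(Y)=H^{r0}_{\epsilon^1}(Y)$ is contained in that component and therefore $H_{\epsilon^1}=0$. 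As $H$ is $\Q$-rational and $1$ has Galois orbit all of $(\Z/d)^*$, we get $H_{\epsilon^i}=0$ for every $i$ coprime to $d$; together with the vanishing of the remaining nontrivial eigenspaces this forces $H=0$, contradicting $H\ne0$. This finishes the reduction and hence the proof.

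I expect the real obstacle to be the first step: making the decomposition $H^r(X,\Q)=\rho^*H^r(Y,\Q)\oplus K$ precise and verifying that every exceptional contribution is genuinely built out of the cohomology of the strata $D_{i_1}\cap\cdots\cap D_{i_k}$ — this is where hypothesis (a) and the explicit toric geometry of the cover are indispensable, and one must be careful because motivic dimension is not monotone under passage to closed subvarieties. By contrast, the eigenspace count is short once Lemma~\ref{lemma:ev} and hypothesis (b) are available.
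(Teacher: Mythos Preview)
Your eigenspace argument (the ``crux'') is correct and is a mild rearrangement of the paper's. The paper passes to the quotient $H^*=H^r_{nt}(Y)/H'$ and bounds $\dim H^*$ from below by noting that $H^r_{\epsilon^{d-1}}(Y,\OO_Y)=H^r(Z,L^{(-d+1)})$ injects into $H^*\otimes\C$, so $H^*$ contains at least $h^r(L^{(-d+1)})$ copies of the cyclotomic $\Q[\mu_d]$-module $N$, whence $\dim H^*\ge \phi(d)h^r(L^{(-d+1)})\ge \dim H^r_{nt}(Y)$ and $H'=0$. Your version first squeezes hypothesis (b) to conclude that $H^r_{\epsilon^1}(Y)$ is pure of type $(r,0)$ and that the eigenspaces for $i$ not coprime to $d$ vanish, and then kills $H$ eigenspace by eigenspace; this is equivalent and arguably more transparent.

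The genuine gap is exactly where you flag it. Your model of the toroidal resolution as an iterated blow-up of $Y$ along the smooth centers $\pi^{-1}(D_I)_{red}$ is not justified, and in fact fails already in the simplest nontrivial case: for $y^d=x_1x_2$ the minimal toric resolution has $d-1$ exceptional curves and is not a single blow-up of the reduced singular locus. More seriously, the blow-up formula you invoke requires a smooth ambient variety, whereas $Y$ is singular precisely along your proposed centers; so the identification of $K$ with Tate-twisted pieces of $H^{<r}(D_I)$ is unsupported.

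The paper avoids any explicit description of $\rho$ and instead proves directly (Proposition~\ref{prop:HX}) that $H^i(X,\Q)=H^i(Y,\Q)\oplus A^i$ with $A^i$ spanned by algebraic cycles. The argument is homological: if $\alpha\in H_i(X)$ dies in $H_i(Y)$, the commutative ladder of pairs $(X,F)\to(Y,E)$ (where $F=\rho^{-1}(E)_{red}$) forces $\alpha$ to come from $H_i(F)$, so it suffices to show $\mu(F)=0$. Stratifying $F$ over the open strata $D_I'$, each piece $F_I'\to D_I'$ is a Zariski-locally-trivial fibration whose fiber is the exceptional set of a toric resolution (hence $\mu=0$); combined with $\mu(D_I')=0$ from hypothesis (a), the listed inequalities for $\mu$ give $\mu(F)\le\max_I(\mu(D_I')+\mu(\text{fiber}))=0$. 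This is what should replace your first step. (Incidentally, your opening reduction to $r\le n$ via hard Lefschetz is unnecessary: hypothesis (b) fixes $r$, and nothing in the argument uses $r\le n$.)
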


\begin{remark}
The proof will show that  the inequality (b)  is necessarily an equality.
\end{remark}

For the ensuing discussion, let us assume that we are in the more
general situation of theorem \ref{thm:GHC}.
Since $Y$ is an orbifold,  it is   a rational
homology manifold. 
Therefore the natural map $\pi^*:H^i(Y,\Q)\to H^i(X,\Q)$ is injective,
since $\frac{1}{d}\pi_*$ gives a left inverse. Also by Poincar\'e duality, we get a cycle map
on the Chow group of  codimension $k$ cycles $CH^k(Y)\otimes \Q\to H^{2k}(Y,\Q)$.

\begin{prop}\label{prop:HX}
  $H^i(X,\Q)=H^i(Y,\Q)\oplus A^i$ where $A^i$ is generated by
  algebraic cycles.
\end{prop}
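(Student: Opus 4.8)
The plan is to use the toroidal structure to understand the exceptional fibers of $\rho:X\to Y$ explicitly, and then invoke the motivic dimension machinery to see that the ``new'' cohomology they contribute is algebraic. First I would recall that $\pi^*$ (hence $\rho^*$) is split injective on rational cohomology since $Y$ is a rational homology manifold, so $H^i(X,\Q)=\rho^*H^i(Y,\Q)\oplus A^i$ with $A^i:=\ker(\rho_*)\cap H^i(X,\Q)$; the only thing to prove is that $A^i$ is generated by algebraic cycles. By the decomposition theorem (or directly from the toroidal local structure), $A^i$ is built from the cohomology of the exceptional locus over the singular strata $D_{i_1}\cap\cdots\cap D_{i_k}$, which sit over $D_{\mathrm{sing}}$. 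More precisely, one has an exact sequence relating $H^*(X)$, $H^*(Y)$, $H^*$ of the exceptional divisor $\rho^{-1}(Y_{\mathrm{sing}})$ and $H^*$ of $Y_{\mathrm{sing}}$, and $A^i$ is a subquotient of the cohomology of the exceptional divisor that maps to zero in $H^*(Y_{\mathrm{sing}})$.

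The key step is the local computation: near a point $y\in Y_{\mathrm{sing}}$, the map $\rho$ is modeled by the toric resolution $\tau:\tilde M_{(a)}\to M_{(a)}$ of the toric variety $M_{(a)}=\Spec \tilde R_{(a_1,\ldots,a_n,d)}$, pulled back along the \'etale map $F_y$. Since $\tau$ is a toric resolution, the fiber over a torus-orbit stratum is a union of toric varieties, whose cohomology is algebraic and in fact spanned by torus-invariant subvarieties. Globally, over each closed stratum $D_J=\bigcap_{j\in J}D_j$ (with its smooth open part), the exceptional locus fibers over $D_J$ with fibers that are iterated toric blow-ups, and the pieces of $A^i$ supported there fit into a (Leray or Mayer--Vietoris) spectral sequence whose $E_2$-terms are $H^a(D_J', R^b)$ with $R^b$ the local constant (or at worst monodromy-invariant) system coming from the fiber cohomology. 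Because the fibers are toric, $R^b$ is a constant system generated by algebraic cycle classes; combined with hypothesis (a), which says $\mu(D_J)=0$ for every $J$ (and hence $\mu$ of any resolution of any locally closed stratum vanishes, using the two $\mu$-inequalities for closed subvarieties and proper surjections), each $H^a(D_J', R^b)$ is generated by algebraic cycles. The three listed properties of $\mu$ — monotonicity under proper surjections, the max-bound for a closed subset and its complement, and the additivity over a topologically trivial smooth projective family — are exactly what is needed to propagate $\mu=0$ from the strata $D_J$ to the total exceptional locus and then to conclude $\mu$ of the relevant pieces of $A^i$ is $0$, i.e. they are spanned by algebraic cycles. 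Finally one checks these cycles push forward to algebraic cycles on $X$ (clear, since they are already supported on $X$) and that the pairing with $\rho^*H^*(Y)$ realizes the claimed direct sum decomposition.

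The main obstacle I expect is bookkeeping the toroidal blow-up combinatorially well enough to see that \emph{all} of $A^i$ — not just the part coming from the top stratum — is accounted for by strata cohomology with toric-fiber coefficients, and in particular that no ``transcendental'' contribution sneaks in from the way the exceptional divisors over different strata glue. The cleanest way around this is probably to stratify $X$ by preimages of the toroidal strata of $Y$, run the argument that $\mu$ of each locally closed piece of the exceptional locus is $0$ using property (a) plus the $\mu$-inequalities, and then use the two inequalities for a closed subset and its complement to assemble $\mu\big(\rho^{-1}(Y_{\mathrm{sing}})\big)=0$; since $X=\big(X\setminus\rho^{-1}(Y_{\mathrm{sing}})\big)\cup\rho^{-1}(Y_{\mathrm{sing}})$ and $X\setminus\rho^{-1}(Y_{\mathrm{sing}})\cong Y\setminus Y_{\mathrm{sing}}$ injects into $Y$, a diagram chase identifies $A^i$ with a subquotient of $H^*(\rho^{-1}(Y_{\mathrm{sing}}))$ that dies in $H^*(Y)$, and $\mu=0$ there gives algebraicity. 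A secondary subtlety is that the local toric model is only reached after an \'etale map $F_y$, so the exceptional fibers are \'etale-locally toric rather than literally toric; but \'etale-local triviality is enough to apply the topological triviality input in the $\mu$-bound, after passing to a finite stratification on which the fiber type is constant.
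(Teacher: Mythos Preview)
Your proposal is correct and follows essentially the same route as the paper. The paper works in homology, sets $F=\rho^{-1}(E)$ (rather than $\rho^{-1}(Y_{\mathrm{sing}})$, a cosmetic difference), stratifies $F$ by the preimages $F_I'$ of the strata $D_I'$, observes that each $F_I'\to D_I'$ is a Zariski locally trivial fibration with toric fiber, and applies exactly the three $\mu$-inequalities you list to conclude $\mu(F)=0$; the diagram chase you describe is then carried out verbatim using $H_i(X,F)\cong H_i(Y,E)$. Your worries about gluing and about the \'etale nature of $F_y$ are handled in the paper simply by the fact that Zariski local triviality of $F_I'\to D_I'$ already suffices for the fibration bound on $\mu$, so no decomposition theorem or Leray argument is needed.
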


\begin{proof}
It is more convenient to work in homology.
Let $F\subset X$ denote the reduced preimage of $E$. We will show that
$\mu(F)=0$ (note that motivic dimension is defined  for singular
varieties as well).  We have a
stratification of $D$ by
\begin{equation}
  \label{eq:D_I}
D_{I}= D_{i_1}\cap\ldots \cap
D_{i_\ell},\quad D_I' = D_I-\bigcup_{j\notin I} D_j
\end{equation}
where $I=\{i_1,\ldots, i_\ell\}$.
Let $F'_I\subseteq F$ be the preimage of $D_F'$, and let $a(I)=(a_{i_1},
\ldots, a_{i_\ell})$. Over  a neighbourhood of $p\in D'_I$, the map $X\to
Y$ is locally isomorphic to the model
$\psi_{a(I)}:\tilde M_{(a(I))}\times \C^{n-\ell}\to M_{(a(I))}\times
\C^{n-\ell}$.
Consequently $ F'_I\to D_I'$ is a Zariski locally trivial  fibration with
fibres  isomorphic to $\psi_{a(I)}^{-1}(0)$. Let $\Psi_{I}$ be a toric resolution of $\psi_{a(I)}^{-1}(0)$.
Then  using the previously stated inequalities, we obtain
$$\mu(F)\le \max_I\mu(F_I')\le\max_I (\mu(D_I')+\mu(\Psi_I))=0$$

Suppose that $\alpha\in H_{i}(X)$ lies in the kernel of $\pi_*$. 
From the diagram
$$
\xymatrix{
 H_{i}(F)\ar[r]\ar[d] & H_{i}(X)\ar[r]\ar[d]^{\pi_*} & H_{i}(X,F)\ar[d]^{\cong} \\ 
 H_{i}(E)\ar[r] & H_{i}(Y)\ar[r] & H_{i}(Y,E)
}
$$
we see that $\alpha$ is the image of  a class in $H_i(F)$ which is
algebraic because $\mu(F)=0$.
Dualizing shows that $H^i(X)$ is generated by $H^i(Y)$ and algebraic cycles.
\end{proof}

\begin{proof}[Proof of theorem \ref{thm:GHC}]
Let $H\subset H^r(X,\Q)$ be an irreducible sub Hodge structure of
level at most $r-2$. By proposition~\ref{prop:HX} and corollary~\ref{cor:ev},
we can decompose $H^r(X,\Q)=H_{nt}^r(Y,\Q)\oplus A$ where $A$ is spanned by
algebraic cycles.  So $H$
is either spanned by an algebraic cycle or it lies in $H_{nt}^r(Y)$.
Assuming the latter, we will show that $H=0$.
Let $H'=\sum \zeta^i H$, where $\zeta\in \mu_d$ is a generator. This is a  $\mu_d$-invariant
Hodge structure containing $H$ and with the same level as $H$.
Thus $H'\otimes \C\subset H_{nt}^{1,r-1}(Y)\oplus\ldots \oplus H_{nt}^{r-1,1}(Y)$
so that $H^r(Y,\OO_Y)\cap H'\otimes \C=0$. Let $H^*= H_{nt}^r(Y)/H'$.  Then
$H^r(Y,\OO_Y)$ injects into $H^*\otimes \C$.
Let $N=\Q[t]/(P(t))$, where 
$$P(t)= \prod_{\gcd(i,d)=1} (t-\zeta^i)$$
is the cyclotomic polynomial. This is the unique irreducible
$\Q[\mu_d]$-module for which $N\otimes \C\supset \C_{\epsilon^{d-1}}$.
It follows that $H^*$ must contain the $m$-fold sum $N^m$, where 
 $m= \dim (H^*\otimes \C)_{\epsilon^{d-1}}$. 
Since
$$(H^*\otimes \C)_{\epsilon^{d-1}}\supseteq H_{\epsilon^{d-1}}^r(Y,\OO_Y)= H^r(L^{-d+1}),$$
we must have $m\ge h^r(L^{-d+1})$. Therefore 
$$\dim H^*\ge \phi(d) h^r(L^{-d+1})\ge\dim H_{nt}^r(Y)$$
by condition (b). Therefore
$H'=0$ as claimed, and the proof is complete.
\end{proof}

We turn to the proof of theorem~\ref{thm:main}. Our main task is to compute the Hodge
numbers.

\begin{lemma}[Hirzebruch]\label{lemma:hirz}
$$\sum_{n=0}^\infty\sum_{k=0}^\infty \chi(\Omega_{\PP^n}^k(i))y^kz^{n}
=\frac{(1+yz)^{i-1}}{(1-z)^{i+1}}$$  
\end{lemma}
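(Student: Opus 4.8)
The plan is to reduce the statement to a functional equation for the two–variable generating function and solve it. Adopt the convention $\Omega^k_{\PP^n}=0$ unless $0\le k\le n$, and set $P_n^{(i)}(y)=\sum_{k\ge0}\chi(\Omega^k_{\PP^n}(i))y^k$ and $G^{(i)}(y,z)=\sum_{n\ge0}P_n^{(i)}(y)z^n$, so that the assertion becomes $G^{(i)}(y,z)=(1+yz)^{i-1}/(1-z)^{i+1}$.

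\emph{Step 1: a recursion in $n$ and $i$.} Fix a hyperplane $H\cong\PP^{n-1}\subset\PP^n$. Restricting $\Omega^k_{\PP^n}(i)$ to $H$ gives $0\to\Omega^k_{\PP^n}(i-1)\to\Omega^k_{\PP^n}(i)\to\Omega^k_{\PP^n}|_H(i)\to0$, and applying $\wedge^k$ to the twisted conormal sequence $0\to\OO_H(-1)\to\Omega^1_{\PP^n}|_H\to\Omega^1_H\to0$ gives $0\to\Omega^{k-1}_H(i-1)\to\Omega^k_{\PP^n}|_H(i)\to\Omega^k_H(i)\to0$. Taking Euler characteristics and using $H\cong\PP^{n-1}$ yields
$$\chi(\Omega^k_{\PP^n}(i))=\chi(\Omega^k_{\PP^n}(i-1))+\chi(\Omega^k_{\PP^{n-1}}(i))+\chi(\Omega^{k-1}_{\PP^{n-1}}(i-1))$$
for every $n\ge1$ and $i\in\Z$. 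Multiplying by $y^kz^n$ and summing over $n\ge1$, $k\ge0$, the $\PP^{n-1}$–terms each acquire a factor $z$, the last one an additional factor $y$ from the index shift $k\mapsto k-1$, and the two $n=0$ contributions ($P_0^{(i)}=P_0^{(i-1)}=1$) cancel, leaving the clean functional equation
$$(1-z)\,G^{(i)}(y,z)=(1+yz)\,G^{(i-1)}(y,z).$$

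\emph{Step 2: a base case and iteration.} Wedging the Euler sequence $0\to\Omega^1_{\PP^n}\to\OO(-1)^{n+1}\to\OO\to0$ produces $0\to\Omega^k_{\PP^n}\to\OO(-k)^{\binom{n+1}{k}}\to\Omega^{k-1}_{\PP^n}\to0$, whence $\chi(\Omega^k_{\PP^n})=\binom{n+1}{k}\binom{n-k}{n}-\chi(\Omega^{k-1}_{\PP^n})=(-1)^k$ for $0\le k\le n$, since $\binom{n-k}{n}=0$ once $k\ge1$. Hence $P_n^{(0)}(y)=\sum_{k=0}^n(-y)^k=(1-(-y)^{n+1})/(1+y)$, and summing the resulting two geometric series in $z$ gives $G^{(0)}(y,z)=1/((1-z)(1+yz))$, which is exactly the claimed formula at $i=0$. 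Iterating the functional equation upward for $i>0$ and downward for $i<0$ then gives $G^{(i)}=\bigl((1+yz)/(1-z)\bigr)^iG^{(0)}=(1+yz)^{i-1}/(1-z)^{i+1}$, which is the assertion.

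The sheaf bookkeeping is routine; the one delicate point is Step 1, namely keeping the twists straight and, above all, correctly forming $\wedge^k$ of the conormal sequence (for a short exact sequence $0\to\mathcal L\to\mathcal E\to\mathcal Q\to0$ with $\mathcal L$ a line bundle this reads $0\to\mathcal L\otimes\wedge^{k-1}\mathcal Q\to\wedge^k\mathcal E\to\wedge^k\mathcal Q\to0$). As a cross-check and an alternative to Steps 1--2, one can compute $G^{(i)}$ in one stroke by Hirzebruch--Riemann--Roch: the Euler sequence gives $\sum_k\mathrm{ch}(\Omega^k_{\PP^n}(i))y^k=e^{ih}(1+ye^{-h})^{n+1}/(1+y)$ with $h$ the hyperplane class, so $P_n^{(i)}(y)$ is the coefficient of $h^n$ in that expression times $\mathrm{td}(\PP^n)=(h/(1-e^{-h}))^{n+1}$; summing a geometric series over $n$ and evaluating the residue at $h=0$ via the substitution $u=e^{-h}$ (the relevant pole sitting at $u=(1-z)/(1+yz)$) again reproduces $(1+yz)^{i-1}/(1-z)^{i+1}$.
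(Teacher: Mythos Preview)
Your argument is correct. The recursion in Step~1 is set up properly (the twists in the conormal filtration are right, and the $n=0$ terms do cancel), and the base case $G^{(0)}=1/((1-z)(1+yz))$ together with the iteration yields the stated formula.

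The paper takes a different route: it does not prove the identity at all but simply cites it as a special case of a formula in Hirzebruch's book \cite[p.~160]{hirz}, where the generating function for $\sum_k\chi(\Omega_{\PP^n}^k(i))y^k$ is obtained via Riemann--Roch. Your approach is more elementary and entirely self-contained: it uses only the restriction and conormal sequences together with the Euler sequence, avoiding any Chern-class computation. The Hirzebruch--Riemann--Roch alternative you sketch at the end is essentially the method of the cited reference, so you have in effect supplied both proofs. Either is acceptable; the recursive one has the advantage of being checkable without outside input, while the citation is shorter.
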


\begin{proof}
  This is a special case of  the formula on \cite[p 160]{hirz}. 
\end{proof}

\begin{lemma}\label{lemma:hN}
Let $Y$ be as in theorem~\ref{thm:main}. Then for 
each $k$,
  $$h^{k,n-k}_{nt}(Y) = \binom{d-1}{n+1}= \frac{d-1}{n+1}\binom{d-2}{n}$$
  In particular, this would vanish for $d-1< n+1$.
\end{lemma}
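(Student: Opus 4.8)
The plan is to compute the nontrivial Hodge numbers $h^{k,n-k}_{nt}(Y)$ directly from Lemma~\ref{lemma:ev}. In the case at hand $Z=\PP^n$, $D=\sum_{i=1}^d D_i$ is a union of $d$ hyperplanes in general position, $L=\OO_{\PP^n}(1)$, and all coefficients $a_j=1$. Since the coefficients are coprime to $d$, the Remark after Lemma~\ref{lemma:ev} gives $D^{(i)}=D_{red}=D$ and $L^{(-i)}=L^{-i}([\tfrac{i}{d}D])=\OO_{\PP^n}(-i+[\tfrac{id}{d}])=\OO_{\PP^n}(0)$ — wait, more carefully: $L^{(-i)}=L^{-i}([\tfrac{i}{d}D])$ and $[\tfrac{i}{d}D]=\sum[\tfrac{i}{d}]D_j=0$ for $1\le i\le d-1$, so $L^{(-i)}=\OO_{\PP^n}(-i)$. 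Hence by Lemma~\ref{lemma:ev}, for $1\le i\le d-1$,
$$H^{k,n-k}_{\epsilon^i}(Y)=H^{n-k}\bigl(\Omega^k_{\PP^n}(\log D)\otimes\OO_{\PP^n}(-i)\bigr).$$
The first step is therefore to get a handle on the cohomology of $\Omega^k_{\PP^n}(\log D)(-i)$.

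The key simplification is that for a union of $d$ hyperplanes in general position in $\PP^n$, the logarithmic bundle $\Omega^1_{\PP^n}(\log D)$ is well understood: there is a residue exact sequence, and more usefully, when $d\ge$ something one has an explicit description. Rather than chase $\Omega^1(\log D)$, I would instead use the filtration of $\Omega^k_{\PP^n}(\log D)$ by "$\Omega^{k-s}\wedge(\text{residues along }s\text{ hyperplanes})$," i.e. the weight filtration, whose graded pieces are $\Omega^{k-s}_{D_J}(-i)|$ summed over $|J|=s$. Each $D_J$ with $|J|=s$ is a linear $\PP^{n-s}$, and the twist $\OO(-i)$ restricts to $\OO_{\PP^{n-s}}(-i)$. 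So the computation reduces, via the associated spectral sequence, to knowing $H^*(\Omega^{k-s}_{\PP^{n-s}}(-i))$ for $0\le s\le \min(k,d)$, which is exactly the input governed by Lemma~\ref{lemma:hirz} (Hirzebruch's formula for $\chi(\Omega^p_{\PP^N}(t))$), together with the classical Bott vanishing for $\Omega^p_{\PP^N}(t)$. Since $-i<0$ for $1\le i\le d-1$, Bott vanishing kills all but one cohomology group of each $\Omega^{k-s}_{\PP^{n-s}}(-i)$, so the Euler characteristics from Lemma~\ref{lemma:hirz} compute the actual dimensions up to sign, and the spectral sequence degenerates for degree (Hodge-theoretic) reasons. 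Summing over $i$ from $1$ to $d-1$, over the $\binom{d}{s}$ choices of $J$, and extracting the generating function, the answer should collapse to $\binom{d-1}{n+1}$.

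Concretely, the cleanest route: form the generating function $\sum_{n,k} h^{k,n-k}_{nt}(Y)\,y^k z^n$, express $h^{k,n-k}_{nt}(Y)=\sum_{i=1}^{d-1}\sum_{s=0}^{\min(k,d)}(-1)^?\binom{d}{s}\chi(\Omega^{k-s}_{\PP^{n-s}}(-i))$ via the weight spectral sequence and Bott, substitute Lemma~\ref{lemma:hirz} with the substitution $i\leadsto -i$, and sum the resulting geometric-type series in $i$ and binomial series in $s$. The factor $(1+yz)^{-i-1}/(1-z)^{-i+1}$ and the $\binom{d}{s}$-weighted contribution of the $s$ residue hyperplanes should combine — I expect the $s$-sum produces a factor $(1+\text{something})^d$ or similar, and the $i$-sum then telescopes — to give a closed form whose $z^n$ coefficient, summed appropriately, is $\binom{d-1}{n+1}$. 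The identity $\binom{d-1}{n+1}=\frac{d-1}{n+1}\binom{d-2}{n}$ is just Pascal bookkeeping and needs no separate argument; the vanishing for $d-1<n+1$ is automatic from the binomial coefficient.

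The main obstacle I anticipate is controlling the weight spectral sequence for $\Omega^k_{\PP^n}(\log D)(-i)$: one must check it degenerates at $E_1$ (which should follow from the fact that $\Omega^\dt_{\PP^n}(\log D)$ underlies the mixed Hodge structure on $H^*(\PP^n-D)$, combined with the twist $(-i)$ being compatible with the $\mu_d$-eigenspace decomposition as in the proof of Lemma~\ref{lemma:ev}) and that the $E_1$ terms vanish off a single row/diagonal so that the alternating sum of Euler characteristics equals a sum of honest dimensions. After that, the remaining work is purely generating-function manipulation, which I would not expect to present in detail.
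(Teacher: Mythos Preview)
Your approach is essentially the paper's: use Lemma~\ref{lemma:ev} to identify $h^{k,n-k}_{nt}(Y)$ with $\sum_{i=1}^{d-1}\dim H^{n-k}(\Omega^k_{\PP^n}(\log D)(-i))$, use the residue/weight filtration to express $\chi(\Omega^k_{\PP^n}(\log D)(-i))$ as $\sum_r\binom{d}{r}\chi(\Omega^{k-r}_{\PP^{n-r}}(-i))$, plug in Lemma~\ref{lemma:hirz}, and sum the resulting generating function in $i$ to a closed form whose coefficients are $\binom{d-1}{n+1}$.

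The one place you make life harder than necessary is the ``main obstacle'' you anticipate. You do not need degeneration of the weight spectral sequence, nor Bott vanishing for the individual graded pieces: Euler characteristics are additive in exact sequences regardless of degeneration, so $\chi(\Omega^k_{\PP^n}(\log D)(-i))=\sum_r\binom{d}{r}\chi(\Omega^{k-r}_{\PP^{n-r}}(-i))$ holds unconditionally. To pass from this Euler characteristic to the single Hodge number, the paper simply invokes Lemma~\ref{lemma:van}, which already says $H^m(\Omega^k_{\PP^n}(\log D)(-i))=0$ for $m+k\neq n$, so $h^{k,n-k}_{nt}(Y)=(-1)^{n-k}\sum_i\chi(\Omega^k_{\PP^n}(\log D)(-i))$ directly. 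This replaces your spectral-sequence and Bott discussion with a one-line citation.
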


\begin{proof}
From the residue isomorphism \cite{deligne}
$$Gr^W_r\Omega_{\PP^n}^k(\log D)\cong
\bigoplus\Omega_{D_{i_1}\cap\ldots \cap D_{i_r}}^{k-r}$$
Thus we deduce
$$\chi(\Omega_{\PP^n}^k(\log D)(-i)) =\sum_{r=0}^d \binom{d}{r}
\chi(\Omega_{\PP^{n-r}}^{k-r}(-i)) $$
Therefore
$$\sum_{n=0}^\infty\sum_{k=0}^\infty \chi(\Omega_{\PP^n}^k(\log
D)(-i))y^kz^{n} = (1+yz)^d \sum_{n=0}^\infty\sum_{k=0}^\infty
\chi(\Omega_{\PP^n}^k(-i))y^kz^{n}$$
Combining this with lemma~\ref{lemma:hirz} yields
\begin{eqnarray*}
 \sum_{i=1}^{d-1}\sum_{n=0}^\infty\sum_{k=0}^\infty \chi(\Omega_{\PP^n}^k(\log D)(-i))y^kz^{n}
&=&\sum_{i=1}^{d-1} (1-z)^{i-1} (1+yz)^{d-i-1}\\
&=& \frac{(1+yz)^{d-1}-(1-z)^{d-1}}{(1+yz)-(1-z)}\\
&=&\frac{(1+yz)^{d-1}-(1-z)^{d-1}}{z(1+y)}\\
&=& \frac{1}{z[y-(-1)]}\sum_{n=0}^{d-2}\binom{d-1}{n+1}[y^{n+1}-(-1)^{n+1}]z^{n+1}\\
&=&\sum_{n=0}^{d-2} \binom{d-1}{n+1}[y^{n}-y^{n-2}+\ldots \pm 1]z^n
\end{eqnarray*}
By lemmas~\ref{lemma:ev} and~\ref{lemma:van},
$$h^{k,n-k}_{nt}(Y)=(-1)^{n-k-1}\sum_{i=1}^{d-1} \chi(\Omega_{\PP^n}^k(\log
D)(-i))$$
This together with the previous formula implies the lemma.
\end{proof}

\begin{proof}[Proof of theorem \ref{thm:main}]
When $r\not= n$, $H^r(X,\Q)$ is spanned by algebraic cycles by
proposition~\ref{prop:HX} and corollary~\ref{cor:van}. So we only need
consider $r=n$. In this case, we apply theorem~\ref{thm:GHC}.
Condition (a)  of this  theorem is clear. For  (b), we
observe that by the previous lemma, we have
$$(d-1)h^n(\OO_{\PP^n}(-d+1)) =(d-1)\binom{d-2}{n} = \sum_k h_{nt}^{k,n-k}(X)$$
\end{proof}

We can handle some related examples in a similar way:

\begin{cor}\label{cor:branchPxP}
  Let $d$ be prime. The generalized Hodge conjecture holds for a
  toroidal resolution of  the
  cyclic branched cover of $(\PP^1)^n$ given by  
$$y^d=  \prod_{i=1}^n\prod_{j=1}^d (x_i-a_{ij})$$
 where  $a_{i1},a_{i2},\ldots, a_{i,p}$ are distinct for each $i$.
\end{cor}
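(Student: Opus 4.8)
The plan is to reduce Corollary~\ref{cor:branchPxP} to Theorem~\ref{thm:GHC} in exactly the same way that Theorem~\ref{thm:main} was, the only difference being that the base is now $Z=(\PP^1)^n$ rather than $\PP^n$, and the line bundle is $L=\OO(1,1,\ldots,1)$ (so that $L^d=\OO_Z(D)$ with $D=\sum_{i,j}\{x_i=a_{ij}\}$, a reduced normal crossings divisor since the $a_{ij}$ are distinct for fixed $i$). First I would verify the hypotheses of Theorem~\ref{thm:GHC}. Condition (a) is immediate: $(\PP^1)^n$ and all the strata $D_I=\bigcap_{(i,j)\in I}\{x_i=a_{ij}\}$ are products of projective lines and points (and in fact $D_I$ is empty as soon as $I$ contains two pairs with the same first index $i$), so all have motivic dimension $0$, being rational varieties with cohomology generated by algebraic cycles. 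The whole content is condition (b), which as in the proof of Theorem~\ref{thm:main} should again turn out to be an equality.

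For condition (b) I would run the Hirzebruch/Euler-characteristic computation adapted to $(\PP^1)^n$. The analogue of Lemma~\ref{lemma:hirz} is the product formula: since $\Omega^\bullet_{(\PP^1)^n}$ is an exterior product of the $\Omega^\bullet_{\PP^1}$'s and $L$ restricts to $\OO(1)$ on each factor, one has
\[
\sum_{k}\chi\bigl(\Omega^k_{(\PP^1)^n}(L^{-i})\bigr)y^k \;=\; \Bigl(\chi(\OO_{\PP^1}(-i)) + y\,\chi(\Omega^1_{\PP^1}(-i))\Bigr)^n,
\]
and by Riemann--Roch on $\PP^1$, $\chi(\OO_{\PP^1}(-i))=1-i$ while $\chi(\Omega^1_{\PP^1}(-i))=\chi(\OO_{\PP^1}(-i-2))=-1-i$, so the bracket is $(1-i)-(1+i)y=(1-i)-(1+i)y$. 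Then, exactly as in Lemma~\ref{lemma:hN}, the residue (adjunction) sequence for the normal crossings divisor $D$ gives $\chi(\Omega^k_{(\PP^1)^n}(\log D)(-i))$ as a sum over the strata $D_I$, and since the nonempty strata are precisely the $(\PP^1)^{n-\ell}$ obtained by choosing at most one value $a_{ij}$ in each of $\ell$ distinct coordinates, the generating function acquires a factor $(1+?\,)$ for each coordinate direction. I would compute $\sum_{i=1}^{d-1}\sum_k \chi(\Omega^k_{(\PP^1)^n}(\log D)(-i))y^k$ as a telescoping sum of the form $\sum_i (\text{something})^{\,?}$, and extract $h^{k,n-k}_{nt}(Y)=(-1)^{n-k-1}\sum_{i=1}^{d-1}\chi(\Omega^k_{(\PP^1)^n}(\log D)(-i))$ using Lemmas~\ref{lemma:ev} and~\ref{lemma:van} (noting $L=\OO(1,\ldots,1)$ is ample, so the vanishing lemma applies). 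Summing over $k$ gives $\dim H^n_{nt}(Y)$, and separately $h^n(Z,L^{-d+1})=h^n((\PP^1)^n,\OO(-d+1,\ldots,-d+1))=(d-2)^n$ by the Künneth/Riemann--Roch computation on $\PP^1$. The claim to be checked is then
\[
(d-1)\,(d-2)^n \;=\; \sum_{k} h^{k,n-k}_{nt}(Y),
\]
which I expect to fall out of the generating-function identity.

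The main obstacle is the bookkeeping in this Euler-characteristic computation: one must correctly account for the combinatorics of the strata of $D$ in $(\PP^1)^n$ (which is genuinely different from the $\PP^n$ case — here a stratum dies as soon as two of the chosen hyperplanes are parallel, i.e.\ share a coordinate direction), and then verify that the resulting closed form for $\sum_k h^{k,n-k}_{nt}(Y)$ exactly matches $(d-1)(d-2)^n$ so that (b) holds with equality. Once (b) is in hand, the conclusion — that every irreducible sub Hodge structure of $H^r(X,\Q)$ of level $\le r-2$ is algebraic, hence the generalized Hodge conjecture for $X$ — is simply Theorem~\ref{thm:GHC} applied to this $(Z,D,L)$; the cases $r\neq n$ are handled as before by Proposition~\ref{prop:HX} together with Corollary~\ref{cor:van} (whose hypotheses hold since $L$ is ample and $D$ is reduced, hence has coefficients coprime to $d$).
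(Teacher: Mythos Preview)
Your plan is correct and follows the paper's approach: reduce to Theorem~\ref{thm:GHC}, note that (a) is trivial since all strata are products of $\PP^1$'s, and verify (b) as the equality $(d-1)(d-2)^n=\dim H^n_{nt}(Y)$ via an Euler-characteristic computation together with Lemmas~\ref{lemma:ev} and~\ref{lemma:van}.

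The one place where the paper is more efficient is exactly the step you flag as the ``main obstacle''. You propose to compute $\chi(\Omega^k_{(\PP^1)^n}(\log D)\otimes L^{-i})$ by first computing the non-logarithmic version via K\"unneth and then feeding in the residue filtration over the strata of $D$. The paper instead observes that, because every component of $D$ is pulled back from a single $\PP^1$ factor, the \emph{logarithmic} complex already splits as an external product: $\Omega^\bullet_{(\PP^1)^n}(\log D)\otimes L^{-i}\cong\boxtimes_{\ell=1}^n\bigl(\Omega^\bullet_{\PP^1}(\log\{a_{\ell 1},\ldots,a_{\ell d}\})\otimes\OO(-i)\bigr)$. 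Hence K\"unneth gives directly
\[
\chi_{n,i}(y):=\sum_k\chi(\Omega^k_{(\PP^1)^n}(\log D)\otimes L^{-i})\,y^k=\bigl((1-i)+(d-1-i)y\bigr)^n,
\]
since on $\PP^1$ one has $\Omega^1_{\PP^1}(\log\{d\text{ points}\})\cong\OO(d-2)$. Then $(-1)^n\sum_{i=1}^{d-1}\chi_{n,i}(-1)=(-1)^n(d-1)(2-d)^n=(d-1)(d-2)^n$, and there is no strata bookkeeping at all. Your route would arrive at the same answer, but this shortcut is worth knowing.
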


\begin{proof}
  Let $D\subset (\PP^1)^n$ be the
  divisor given by the union of $x_i-a_{ij}=0$, and let $L=
  \OO(1)\boxtimes \ldots \boxtimes \OO(1)$. 
We have only to check condition (b) of  theorem~\ref{thm:GHC} for $r=n$.
We can compute
  $h^n(L^{-d+1}) = (d-2)^n$ immediately. For the other side, we
define the generating function
$$\chi_{n,i}(y)= \sum_k \chi(\Omega_{(\PP^1)^n}^k(\log D)\otimes L^{-i})y^k$$
Then by K\"unneth's
  formula, we obtain
$$\chi_{n,i}(y) = \chi_{1,i}(y)^n= (1-i+(d-i-1)y)^n$$
We have
$$\sum_{i=1}^{d-1}\sum_k  h^{r-k}(\Omega^k(\log D)\otimes L^{-i})= (-1)^n\sum_{i=1}^{d-1}
\chi_{n,i}(-1) = (d-1)(d-2)^n$$
which implies (b).
\end{proof}

Suppose that $\dim X=
n=2m-1$ is odd. Then we have the Abel-Jacobi map
$$\alpha:CH^m(X)_{hom}\to   J^n(X)= \frac{H^n(X,\C)}{F^m+H^n(X,\Z)}$$
from the homologically trivial part of the Chow group
to  the intermediate Jacobian. Nori \cite{nori} has constructed a
filtration
$$A_0CH^m(X)\subseteq\ldots \subseteq A_{n-m}CH^m(X)=CH^m(X)_{hom}$$
where $A_0$ is the subgroup of cycles algebraically equivalent to $0$.
In general, a cycle lies in $A_r$ if it is induced via a
correspondence from a  homologically
trivial $r$-cycle on another variety.

\begin{cor} Suppose that $X$ is either a variety of the type given in  theorem
\ref{thm:main} or corollary \ref{cor:branchPxP}
with   $n$ is odd. Then $\alpha(A_{n-m-1}CH^m(X))=0$. In particular,
  $\alpha(\eta)=0$ for any cycle $\eta$ algebraically equivalent to $0$.
\end{cor}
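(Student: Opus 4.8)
The plan is to combine Nori's construction of the filtration $A_\bullet$ with the level estimate of Theorem \ref{thm:main} (equivalently Theorem \ref{thm:GHC}), using crucially that $n=2m-1$ is odd. Write $r=n-m-1$. First I would unwind the definition: a class $\eta\in A_rCH^m(X)$ is by construction of the form $\eta=\Gamma_*\xi$ for some smooth projective variety $W$ (say of dimension $w$), a homologically trivial cycle $\xi$ of dimension $r$ on $W$, and a correspondence $\Gamma\in CH(W\times X)$. Put $c=w-r$. The Abel--Jacobi invariant $\alpha_W(\xi)$ lies in $J^{2c-1}(W)=H^{2c-1}(W,\C)/(F^cH^{2c-1}(W)+H^{2c-1}(W,\Z))$, the intermediate Jacobian attached to the weight $2c-1$ Hodge structure $H^{2c-1}(W,\Q)$; and by the standard compatibility of the Abel--Jacobi map with the action of correspondences one has $\alpha_X(\eta)=\Gamma_*\alpha_W(\xi)$, where $\Gamma_*\colon J^{2c-1}(W)\to J^n(X)$ is the map induced, up to a Tate twist, by the morphism of Hodge structures $\phi=\Gamma_*\colon H^{2c-1}(W,\Q)\to H^n(X,\Q)$.

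Next I would bound the level of $\operatorname{Im}(\phi)$. Since $2r+1=2(n-m-1)+1=n-2$ (using $2m=n+1$), hard Lefschetz on $W$ provides an isomorphism of Hodge structures $H^{2c-1}(W)\cong H^{2r+1}(W)$ up to a Tate twist (apply the appropriate power of an ample class in whichever of the two complementary degrees $2c-1$, $2r+1$ is $\le w$), so $H^{2c-1}(W,\Q)$ has level $\le 2r+1=n-2$. Tate twists do not change levels, and the image of a morphism of Hodge structures has level no larger than that of the source, so $\operatorname{Im}(\phi)$ is a sub-Hodge structure of $H^n(X,\Q)$ of level $\le n-2$. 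Decompose it into irreducibles. By Theorem \ref{thm:main} for the varieties of that theorem, and by Theorem \ref{thm:GHC} (whose hypotheses for $r=n$ are verified in the proof of Corollary \ref{cor:branchPxP}) for the covers of $(\PP^1)^n$, every irreducible sub-Hodge structure of $H^n(X,\Q)$ of level $\le n-2$ is spanned by an algebraic cycle. But algebraic cycle classes lie in even-degree cohomology, whereas $n$ is odd, so a nonzero sub-Hodge structure of $H^n(X,\Q)$ cannot be spanned by algebraic cycles; hence every summand vanishes and $\operatorname{Im}(\phi)=0$. Therefore $\Gamma_*\colon J^{2c-1}(W)\to J^n(X)$ is zero, so $\alpha_X(\eta)=0$. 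Since $\eta\in A_{n-m-1}CH^m(X)$ was arbitrary, $\alpha(A_{n-m-1}CH^m(X))=0$. For the last assertion, a cycle algebraically equivalent to $0$ lies in $A_0CH^m(X)$, and $A_0\subseteq A_{n-m-1}$ because $n-m-1=m-2\ge 0$, so its Abel--Jacobi image vanishes as well.

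I expect the only genuine subtlety to be in the first paragraph: making precise that the restriction of $\alpha_X$ to $A_rCH^m(X)$ is computed by pushing forward Abel--Jacobi invariants of dimension-$r$ cycles along correspondences, and that the induced map on intermediate Jacobians is the one attached to a morphism of Hodge structures out of $H^{2c-1}$. This is exactly the input furnished by Nori's paper \cite{nori} together with the functoriality of the Abel--Jacobi map under correspondences, but it should be stated with care (in particular, one reduces at once to a single correspondence and a single source variety $W$, and one keeps track of the Tate twist, which is harmless for the level count). The level computation and the appeal to Theorem \ref{thm:main}/\ref{thm:GHC} are then routine.
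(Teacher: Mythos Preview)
Your proof is correct and follows the same strategy as the paper: both reduce to the fact that the Abel--Jacobi image of $A_{n-m-1}CH^m(X)$ is controlled by a sub-Hodge structure of $H^n(X,\Q)$ of level at most $n-2$, which the main theorem forces to vanish since $n$ is odd. The only difference is that the paper invokes \cite{nori} directly for the statement that $\alpha(A_{n-m-1}CH^m(X))$ lands in the subtorus attached to the maximal Hodge substructure contained in $F^1H^n(X)$, whereas you re-derive this level bound by unwinding the filtration, using functoriality of the Abel--Jacobi map under correspondences, and a hard Lefschetz estimate on the source variety.
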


\begin{proof}
  The image  $\alpha(A_{n-m-1}CH^m(X))$ lies the subtorus determined by the maximal integral Hodge
  structure contained in $F^1H^n(X)$ \cite{nori}. The theorem implies
  that this Hodge structure is zero.
\end{proof}

This argument also shows that $J^n(X)_{alg}=0$, where
$J^n(X)_{alg}\subset J^n(X)$ is
the maximal abelian subvariety \cite[\S 8.2.1]{voisin}.

\section{Nonabelian covers}

Our goal is to extend the previous estimates to situations where a
possibly nonabelian finite group $G$ acts on a variety. This will
apply  in particular to $G$-covers.
Let $\hat G$ be the set of characters of irreducible $\C[G]$-modules, and $1\in \hat G$
the character of the trivial module.
Given a character $\chi$ of an irreducible $\C[G]$-module, let 
$$e_\chi = \frac{\chi(1)}{|G|}\sum_{g\in G}\bar\chi(g)g$$
denote the corresponding central idempotent \cite[thm 33.8]{cr}. 
This determines the $\chi$-isotypic submodule of a
$\C[G]$-module $M$ by  $M_\chi= e_\chi M$. Let $M_{nt}=\sum M_\chi$, as $\chi$ ranges
over the nontrivial characters.

We introduce an  invariant that will measure the difference between the complex
and rational representation theory.
Given a finite dimensional $\C[G]$-module  $M$, we define the {\em
  rational   span} as the minimal (with respect to dimension)  $\Q[G]$-module $M'$
such that $M'\otimes \C\supseteq M$. 
Of course, the span  is only an isomorphism
class, but its character is well defined, as are the numbers
$$\sigma(M)=\dim_\Q M',\quad \Phi(M)= \frac{\sigma(M)}{\dim_\C M}$$
A character will be
called rational, if the associated $\C[G]$-module is realizable over
$\Q$. Given a character $\sum_{\chi\in \hat G} n_\chi\chi$, the
character of its rational span can be characterized as the rational
character $\sum r_\chi\chi$ with $r_\chi\ge n_\chi$ such that $\sum r_\chi$ is minimal.
 We let $\Q(\chi)$ denote the extension of $\Q$
obtained by adjoining the values  $\chi(g)$. The Schur index $m(\chi)$
is the degree of the smallest extension of $\Q(\chi)$ over
which $M$ can be realized, cf. \cite[41.4]{cr}. The Galois group
$Gal(\bar\Q/\Q)$ acts on $\hat G$; $orb(\hat G)$ will denote the set
of orbits. The orbit $orb(\chi)$ of a given $\chi$ is the set of
Galois conjugates  $\gamma\chi$ with $\gamma\in Gal(\Q(\chi)/\Q)$, and these are all distinct.

\begin{lemma}\-
  \begin{enumerate}
  \item[(a)] If $\chi\in \hat G$,  $\Phi(\chi)$ is the product of $m(\chi)$ and the degree of
  $\Q(\chi)$ over $\Q$. In particular, it is
 an integer.
\item[(b)] For a non irreducible character $\xi=\sum_{\chi\in \hat G}
  n_\chi \chi$,
$$\sigma(\xi) =\sum_{\Gamma\in orb(\hat G)} \max_{\chi\in \Gamma}
\left\lceil \frac{n_\chi}{m(\chi)}\right\rceil\sigma(\chi)$$ 
where $\lceil\, \rceil$ is the round up or ceiling function.
  \end{enumerate}
\end{lemma}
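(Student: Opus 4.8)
The plan is to recall the structure theory of rational group algebras and reduce everything to the behavior of a single Galois orbit. First I would recall the Wedderburn decomposition of $\Q[G]$: the simple factors are in bijection with $orb(\hat G)$, and the factor attached to the orbit of $\chi$ is a matrix algebra over a division algebra $\Delta_\chi$ whose center is $\Q(\chi)$ and whose index over $\Q(\chi)$ is the Schur index $m(\chi)$ (see \cite[\S41]{cr}). The minimal $\Q[G]$-module $N_\chi$ supported on this factor is the unique simple $\Q[G]$-module with $N_\chi\otimes\C$ containing $\chi$; as a $\C[G]$-module $N_\chi\otimes\C\cong m(\chi)\bigoplus_{\gamma\in orb(\chi)}(\gamma\chi)$, so that $\dim_\Q N_\chi = \sigma(\chi)= m(\chi)\,|orb(\chi)|\,\chi(1)$. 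Dividing by $\dim_\C\chi=\chi(1)$ gives $\Phi(\chi)=m(\chi)\,|orb(\chi)|$, and since $|orb(\chi)|=[\Q(\chi):\Q]$ (the orbit being the set of Galois conjugates, all distinct), this is exactly the product asserted in (a); it is visibly an integer. That settles (a).

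For (b) I would argue that any $\Q[G]$-module $M'$ is a direct sum of its isotypic pieces over the simple factors of $\Q[G]$, i.e. $M'\cong\bigoplus_{\Gamma\in orb(\hat G)} N_\Gamma^{\oplus k_\Gamma}$ for nonnegative integers $k_\Gamma$, where $N_\Gamma$ is the simple module attached to the orbit $\Gamma$. The condition $M'\otimes\C\supseteq M$ decouples orbit by orbit: for the orbit $\Gamma$ containing a fixed $\chi$, the multiplicity of $\gamma\chi$ in $M'\otimes\C$ is $k_\Gamma\, m(\chi)$ (this is independent of which $\gamma\in\Gamma$, since Galois conjugation permutes the $\gamma\chi$ transitively and $m(\gamma\chi)=m(\chi)$). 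To dominate $\xi$ we need $k_\Gamma\, m(\chi)\ge n_{\gamma\chi}$ for every $\gamma\in\Gamma$, i.e. $k_\Gamma\ge\lceil n_{\gamma\chi}/m(\chi)\rceil$ for all $\gamma$, equivalently $k_\Gamma\ge\max_{\psi\in\Gamma}\lceil n_\psi/m(\psi)\rceil$. Minimizing $\dim_\Q M'=\sum_\Gamma k_\Gamma\dim_\Q N_\Gamma=\sum_\Gamma k_\Gamma\,\sigma(\chi_\Gamma)$ forces $k_\Gamma$ to equal that maximum, which gives precisely the formula in (b).

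The one genuinely delicate point — the step I expect to be the main obstacle — is justifying that the multiplicity of each complex irreducible $\gamma\chi$ inside $N_\Gamma\otimes\C$ is exactly the Schur index $m(\chi)$, and that this is uniform across the orbit. This is the content of the theorem identifying the simple component of $\Q[G]$ with $M_{|\Gamma|\,m(\chi)}(\Delta_\chi)$ together with $Z(\Delta_\chi)=\Q(\chi)$, $\dim_{\Q(\chi)}\Delta_\chi=m(\chi)^2$; scalar extension to $\C$ splits $\Q(\chi)$ into $[\Q(\chi):\Q]=|\Gamma|$ copies of $\C$ and turns $\Delta_\chi$ into $M_{m(\chi)}(\C)$, producing each conjugate character with multiplicity $m(\chi)$. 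Once this identification is invoked from \cite{cr}, the rest is elementary bookkeeping: orbit-wise decoupling of the module-containment condition and a ceiling-function optimization. I would present (a) as the single-orbit, $n_\chi=1$ specialization of the mechanism underlying (b), so the two parts share essentially the same proof.
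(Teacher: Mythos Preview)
Your approach is correct and is essentially the one the paper takes: the paper simply cites \cite[thm 70.15]{cr} for the fact that the rational span of $\chi\in\hat G$ has character $m(\chi)\sum_{\chi'\in orb(\chi)}\chi'$, which is exactly the structure of $N_\chi\otimes\C$ you spell out, and then observes that (b) follows from the characterization of the rational span as the rational character $\sum r_\chi\chi$ with $r_\chi\ge n_\chi$ and $\sum r_\chi$ minimal---your orbit-by-orbit ceiling optimization is just this made explicit. One small slip worth fixing: the simple component of $\Q[G]$ attached to $\Gamma$ is $M_{\chi(1)/m(\chi)}(\Delta_\chi)$, not $M_{|\Gamma|m(\chi)}(\Delta_\chi)$ (check dimensions over $\Q$, or note that after extending scalars the matrix size must become $\chi(1)$); fortunately your conclusion that each Galois conjugate appears in $N_\Gamma\otimes\C$ with multiplicity $m(\chi)$ is unaffected, since that comes from $\Delta_\chi\otimes_{\Q(\chi)}\C\cong M_{m(\chi)}(\C)$ and not from the outer matrix size.
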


\begin{proof}
  (a) is an immediate consequence of \cite[thm 70.15]{cr} which
implies that the character of the span of $\chi\in \hat G$ is
$\sum_{\chi'\in orb(\chi)} m(\chi)\chi'$.
This also implies (b) by the above remarks.
\end{proof}

\begin{remark}
When $\chi\in \hat G$, $m(\chi)$ and $\sigma(\chi)$ are Galois
invariant, so we can write these as functions of the  
orbit. So the formula (b) can be simplified ever so slightly to
$$\sigma(\xi) =\sum_{\Gamma\in orb(\hat G)} \sigma(\Gamma)\max_{\chi\in \Gamma}
\left\lceil \frac{n_\chi}{m(\Gamma)}\right\rceil$$ 
\end{remark}

Armed with this formula, and standard facts from \cite[\S 28, \S 70]{cr},
we can compute a number of examples:
\begin{enumerate}
\item[(a)]  If $G=\Z/d\Z$, then $\Phi(\epsilon^i)=\phi(d/\gcd(i,d))$.
\item[(b)] If $G=S_N$ is the symmetric group, $\Phi(\chi)=1$ for all $\chi$.
\item[(c)] If $G=\{\pm 1,\pm i, \pm j,\pm k\}$ is the quaternion group, and
  $\chi$ the character of the  unique $2$ dimensional irreducible complex representation, $\Phi(\chi)=m(\chi)=2$.
\end{enumerate}

We come to  the key estimate.  We first note that if $M$ is
$\Q[G]$-module, then so is $M_{nt}\cong M/M_1$.

\begin{prop}\label{prop:nonab}
  Suppose that $G$ is a finite group of automorphisms of a rational
  pure  effective Hodge structure $H$ of weight $i$. The dimension of any sub Hodge
  structure $H'\subset H_{nt}$ of
level less than $i-2k$ is bounded above by the  difference 
$$\dim H_{nt}-\sigma(H_{nt}^{0i}\oplus\ldots\oplus
H_{nt}^{k,i-k}\oplus\overline{H_{nt}^{0i}\oplus\ldots \oplus H_{nt}^{k,i-k}})$$

\end{prop}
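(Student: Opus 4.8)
The plan is to mimic the argument used in the proof of Theorem~\ref{thm:GHC}, but with the cyclic group $\mu_d$ replaced by the general finite group $G$ and the cyclotomic module $N$ replaced by the appropriate rational $G$-module coming from the isotypic decomposition. First I would set $H'' = \sum_{g\in G} gH' \subseteq H_{nt}$, the smallest $G$-submodule containing $H'$. Since $G$ acts by morphisms of Hodge structures, $H''$ is again a rational sub Hodge structure, it still lies in $H_{nt}$, and its level is no larger than that of $H'$ (taking a $G$-translate of a $(p,q)$-class keeps it a $(p,q)$-class, and a sum of sub Hodge structures of level $<i-2k$ has level $<i-2k$). So without loss of generality I may replace $H'$ by the $G$-stable Hodge structure $H''$; it suffices to bound $\dim H''$ by the stated quantity, since $\dim H' \le \dim H''$.

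Next I would pass to the quotient $Q = H_{nt}/H''$, which is again a $\Q[G]$-module. The hypothesis that $H''$ has level $< i-2k$ means $H''\otimes\C \subseteq H_{nt}^{k+1,i-k-1}\oplus\cdots\oplus H_{nt}^{i-k-1,k+1}$; equivalently $H''\otimes\C$ meets none of the pieces $H_{nt}^{0i},\ldots,H_{nt}^{k,i-k}$ nor their complex conjugates $H_{nt}^{i0},\ldots,H_{nt}^{i-k,k}$. Hence the composite $\bigl(H_{nt}^{0i}\oplus\cdots\oplus H_{nt}^{k,i-k}\oplus \overline{H_{nt}^{0i}\oplus\cdots\oplus H_{nt}^{k,i-k}}\bigr)\hookrightarrow H_{nt}\otimes\C \twoheadrightarrow Q\otimes\C$ is injective. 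Call this complex subspace $M\subseteq Q\otimes\C$; note $M$ is stable under $G$ and under complex conjugation (because the collection of Hodge pieces involved is conjugation-stable), so $M$ is the complexification of a rational $G$-submodule, or at least $M$ together with its conjugate is, which is all that is needed.

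The crux is then the representation-theoretic step: a $\Q[G]$-module $Q$ whose complexification contains a $G$-stable subspace $M$ must itself have $\Q$-dimension at least $\sigma(M)$, by the very definition of the rational span $\sigma$. Indeed $Q$ is a rational $G$-module with $Q\otimes\C \supseteq M$, so $\dim_\Q Q \ge \sigma(M)$. Applying this with $M$ as above and then rearranging gives
$$\dim H'' = \dim H_{nt} - \dim Q \le \dim H_{nt} - \sigma\!\left(H_{nt}^{0i}\oplus\cdots\oplus H_{nt}^{k,i-k}\oplus\overline{H_{nt}^{0i}\oplus\cdots\oplus H_{nt}^{k,i-k}}\right),$$
which is exactly the asserted bound, and since $\dim H' \le \dim H''$ we are done. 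The main obstacle I anticipate is the bookkeeping around rationality and conjugation: one must check carefully that the subspace $M$ is defined over $\Q$ (or handle the minor discrepancy if it is only conjugation-stable) so that the inequality $\dim_\Q Q \ge \sigma(M)$ may legitimately be invoked; this is where the explicit presence of the conjugate summand $\overline{H_{nt}^{0i}\oplus\cdots\oplus H_{nt}^{k,i-k}}$ in the statement does the necessary work, guaranteeing that $M$ is $\mathrm{Gal}(\C/\R)$-stable and hence that $\sigma(M)$ computes the correct lower bound on $\dim_\Q Q$. Everything else is a formal diagram chase combined with the definition of $\sigma$ recalled just before the proposition.
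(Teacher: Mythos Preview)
Your argument is correct and matches the paper's proof essentially line for line: replace $H'$ by its $G$-saturation, observe that the level condition forces $T\cap(H'\otimes\C)=0$, and conclude that the rational $G$-module $H_{nt}/H'$ contains $T$ after extending scalars, whence $\dim(H_{nt}/H')\ge\sigma(T)$. Your closing worry about rationality is unnecessary, since $\sigma$ is defined for arbitrary finite-dimensional $\C[G]$-modules (the rational span of $M$ is by definition the smallest $\Q[G]$-module whose complexification contains $M$), so no descent of $T$ to $\Q$ is required.
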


\begin{proof}
Let $T= H_{nt}^{0i}\oplus\ldots\oplus
H_{nt}^{k,i-k}\oplus\overline{H_{nt}^{0i}\oplus\ldots \oplus H_{nt}^{k,i-k}}$.
Given a sub Hodge structure $H'$ of 
level $<i-2k$, by replacing it with $\sum gH'$, we can assume without loss of generality that  it is
$G$-invariant.  By the level assumption
$T\cap (H'\otimes \C)=0$.
Thus $H_{nt}/H'$  is a $\Q[G]$-module
containing $T$ after extending
scalars.  Therefore $H/H'$ contains the rational span of $T$.
\end{proof}

Putting everything together yields:

\begin{cor}\label{cor:nonab}
If $G$ is a finite group acting on a projective orbifold $Y$,  the
dimension of a sub Hodge structure of $H_{nt}^i(Y)$ of 
level less than $i-2k$ is bounded above by the  difference 
\begin{equation}
  \label{eq:nonabBnd}
\dim H_{nt}^i(Y)- \sum_{\Gamma\in orb(\hat
  G-1)}\sum_{\substack{p+q=i\\p-q\ge k}} |\Gamma|m(\Gamma) \max_{\chi\in\Gamma}
\left\lceil \frac{h^{p,q}_\chi(Y) }{m(\Gamma)\chi(1)}\right\rceil 
\end{equation}
In particular when $i$ is even, we get a bound on the dimension  the space of Hodge
cycles  by applying this with $k=i/2-1$.
\end{cor}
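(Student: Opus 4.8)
The plan is to apply Proposition~\ref{prop:nonab} to the rational Hodge structure $H = H^i(Y,\Q)$, which is pure of weight $i$ and effective because $Y$ is a projective orbifold (as recalled in Section~1), and on which $G$ acts by automorphisms of Hodge structures since it acts by automorphisms of $Y$. For any sub Hodge structure $H'\subset H^i_{nt}(Y)$ of level less than $i-2k$, the proposition gives at once
\begin{gather*}
\dim H' \le \dim H^i_{nt}(Y) - \sigma(T),\\
T = \bigoplus_{p\le k}H^{p,i-p}_{nt}(Y)\ \oplus\ \overline{\bigoplus_{p\le k}H^{p,i-p}_{nt}(Y)}.
\end{gather*}
So the entire content of the corollary is that $\sigma(T)$ is bounded below by the explicit double sum in \eqref{eq:nonabBnd}, and the task is to unwind the formula for $\sigma$.

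For that I would use part (b) of the lemma on rational spans. The module $T\subseteq H_{nt}$ is a $\Q[G]$-module, and its character $\xi = \sum_\chi n_\chi\chi$ has $n_1 = 0$ while, for $\chi$ nontrivial, $n_\chi$ is $1/\chi(1)$ times the sum of $h^{p,q}_\chi(Y)$ over the pairs $(p,q)$ with $p+q=i$ and $\min(p,q)\le k$; here one uses that the $G$-action on $H\otimes\C$ is defined over $\Q$, hence commutes with complex conjugation, so that $\overline{H^{p,q}_\chi} = H^{q,p}_{\bar\chi}$ and $h^{p,q}_{\bar\chi} = h^{q,p}_\chi$. Next, part (a) of the lemma together with the identification $|orb(\chi)| = [\Q(\chi):\Q]$ gives $\sigma(\chi) = |\Gamma|\,m(\Gamma)\,\chi(1)$ for $\chi$ in a Galois orbit $\Gamma$, with $m$ and $\chi(1)$ constant along $\Gamma$, and with $\bar\chi$ lying in the same $\Gamma$ as $\chi$ because complex conjugation belongs to $Gal(\bar\Q/\Q)$. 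Substituting all of this into the formula of part (b) expresses $\sigma(T)$ as a sum over the orbits $\Gamma\in orb(\hat G-1)$; grouping each pair $(p,q)$ with its conjugate $(q,p)$ converts the symmetric range $\min(p,q)\le k$ into the one-sided range appearing in \eqref{eq:nonabBnd}, and a bookkeeping argument with the ceiling function then produces the stated lower bound for $\sigma(T)$. Combined with the first display, this proves the inequality.

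For the last assertion, take $i$ even and $k = i/2-1$, so that $i-2k = 2$: a weight-$i$ rational Hodge structure of level less than $2$ has level $0$ (level and weight have the same parity), hence is a direct sum of copies of $\Q(-i/2)$, and conversely the span of all Hodge classes of type $(i/2,i/2)$ in $H^i_{nt}(Y)$ is a sub Hodge structure of exactly this kind; applying the inequality to this particular $H'$ bounds the dimension of the space of Hodge cycles in $H^i_{nt}(Y)$.

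The step I expect to be the main obstacle is the middle paragraph, namely matching the single self-conjugate module $T$ of Proposition~\ref{prop:nonab} against the pair-by-pair double sum in \eqref{eq:nonabBnd}: one must keep honest track of the Schur indices $m(\Gamma)$, the orbit sizes $|\Gamma| = [\Q(\chi):\Q]$, the merging of $\chi$ and $\bar\chi$ within each orbit, and the ceilings, so that the combinatorial estimate comes out in the form displayed. Everything else is a direct invocation of Proposition~\ref{prop:nonab} and the rational-span lemma.
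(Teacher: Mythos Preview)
Your approach is exactly the one the paper intends: the corollary is stated immediately after Proposition~\ref{prop:nonab} with the phrase ``Putting everything together yields,'' and no separate proof is given. What ``putting everything together'' means is precisely your two steps---invoke Proposition~\ref{prop:nonab} to get $\dim H'\le \dim H^i_{nt}(Y)-\sigma(T)$, then bound $\sigma(T)$ from below using the formula (b) of the rational-span lemma together with $\sigma(\chi)=|\Gamma|\,m(\Gamma)\,\chi(1)$ and the Hodge--Galois symmetry $h^{q,p}_\chi=h^{p,q}_{\bar\chi}$---and you have identified the only nontrivial point, namely the combinatorial regrouping of the self-conjugate module $T$ into the one-sided sum over $(p,q)$ displayed in \eqref{eq:nonabBnd}.
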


The main remaining issue is whether we can actually compute this bound.
We will work this out for covers. We will fix the following notation
for the remainder of this section.
Let $Z$ be an $n$ dimensional smooth projective
variety. Let $D\subset Z$ be a reduced effective divisor with simple normal
crossings and let $V=Z-D$.   
Suppose that $\rho:\pi_1(V)\to G$ is
surjective homomorphism onto a finite group. We can construct the
associated \'etale cover $U\to V$. Let $\pi:Y\to Z$ be the normalization of $Z$
in the function field of $\C(U)$, and let $E=\pi^{-1} D$. We refer to the triple
$(Z,D,\rho)$ as the {\em branching data} for $Y$.

We first analyze the local picture.
 We can cover $Z$ by coordinate polydisks $\Delta_i$
so that $D$ is given by $x_1\ldots x_{k_i}=0$. 
Let us fix one of these, and suppress the subscript $i$ below.
Then the  fundamental group
$\pi_1(\Delta-D)=\Z^k$ with generators corresponding to loops around
the coordinate hyperplanes.  
Thus the preimage of $\Delta-D$ in $Y$ is
given by a disjoint union of connected abelian covers of $\Delta-D$.
We can describe these components explicitly.

\begin{lemma}\label{lemma:abelian}
A normal connected abelian cover $\tilde\Delta\to \Delta$ is an open set (in the classical
topology) of a normal affine toric variety 
  with finite quotient singularities.  The projection to $\Delta$ is   flat.
\end{lemma}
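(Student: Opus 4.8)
The statement concerns a normal connected abelian cover $\tilde\Delta\to\Delta$ of a polydisk $\Delta$ on which $D$ is the coordinate divisor $x_1\cdots x_k=0$. The plan is to reduce this to the toric computation already carried out in the proof of Lemma~\ref{lemma:toric0}. First I would describe the cover group-theoretically: the fundamental group $\pi_1(\Delta-D)\cong\Z^k$ is generated by meridian loops $\gamma_1,\ldots,\gamma_k$, and a connected abelian cover corresponds to a surjection onto a finite abelian quotient, i.e.\ to a finite-index subgroup $\Lambda\subset\Z^k$. After an element of $GL_k(\Z)$ (a multiplicative change of the coordinates $x_1,\ldots,x_k$, which is an automorphism of $(\Delta,D)$ up to shrinking), we may assume $\Lambda$ is diagonal, generated by $e_1',\ldots,e_k'$ where $e_j'=d_j e_j$ along the first few coordinates; the cover of $\Delta-D$ is then the restriction of a product of Kummer covers, together with the untouched polydisk factor in the remaining coordinates.

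Next I would identify the normalization $\tilde\Delta$ with (an open subset of) a normal affine toric variety. After the reduction above, $\tilde\Delta$ is the normalization of
$$
\Spec \C[x_1,\ldots,x_n]\big[w_1,\ldots,w_\ell\big]\big/\big(w_1^{d_1}-x_{j_1},\ldots\big),
$$
which is exactly of the shape treated in the proof of Lemma~\ref{lemma:toric0}: it is a semigroup ring $\C[\tilde S]$ for the saturation $\tilde S$ of the lattice-semigroup generated by the relevant exponent vectors, and by that proof $\tilde S$ is simplicial, so $\Spec\C[\tilde S]$ is a normal affine toric variety with finite quotient (abelian) singularities. The open set $\tilde\Delta$ is the preimage of the polydisk $\Delta$ under the toric map to $\A^n$, which is open in the classical topology. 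Flatness of $\tilde\Delta\to\Delta$ then follows because the map is the base change of the finite flat map $\Spec\C[\tilde S]\to\A^n$: finiteness is clear from the integral dependence relations, and flatness holds because $\C[\tilde S]$ is a Cohen--Macaulay ring (toric rings are CM) finite over the regular ring $\C[x_1,\ldots,x_n]$, so ``miracle flatness'' (a finite module over a regular ring is flat iff CM, i.e.\ iff it is a maximal Cohen--Macaulay module) applies; alternatively one checks directly that $\C[\tilde S]$ is a free $\C[x_1,\ldots,x_n]$-module using the monomial basis.

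\textbf{Main obstacle.} The genuinely delicate point is the reduction to diagonal form and bookkeeping the normalization correctly: one must check that the $GL_k(\Z)$ change of coordinates really is induced by an (analytic) automorphism of the germ $(\Delta,D)$ so that it does not affect the statement, and that passing to the saturation (normalization) does not destroy toricity or the quotient-singularity property — but this last point is exactly what the proof of Lemma~\ref{lemma:toric0} establishes, so I would simply invoke it. A secondary subtlety is that $\tilde\Delta$ is only an \emph{open} subset of the affine toric variety, not the whole thing, so assertions like flatness must be phrased as base changes of the global toric morphism; this is routine once stated carefully.
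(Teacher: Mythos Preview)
Your outline matches the paper's argument almost exactly: describe the cover by a finite-index subgroup of $\Z^k$, diagonalize via elementary divisors, reduce to the explicit toric computation of Lemma~\ref{lemma:toric0} (the paper phrases this as a tensor product of the rings $\tilde R_{(a_1,\ldots,a_n,d)}$), and deduce the quotient-singularity property there. For flatness the paper simply notes that $\tilde R$ is a free $\C[x_1,\ldots,x_n]$-module (your ``monomial basis'' alternative), citing \cite[\S3]{ev}; your Cohen--Macaulay plus miracle-flatness route is equivalent but a bit more roundabout.

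One point to correct. You say the $GL_k(\Z)$ change of basis is realized by ``a multiplicative change of the coordinates $x_1,\ldots,x_k$, which is an automorphism of $(\Delta,D)$ up to shrinking.'' This is false in general: a monomial substitution $x_i\mapsto\prod_j x_j^{m_{ij}}$ with $M\in GL_k(\Z)$ is an automorphism of the torus $(\C^*)^k$ but \emph{not} of any polydisk neighbourhood of the origin unless $M$ is a signed permutation matrix (the inverse has negative exponents). The paper avoids this by not trying to change the $x$-coordinates at all; instead it writes the cover directly in the original coordinates as
\[
y_i^{d_i}=\prod_{j\in J_i} x_j^{a_{ij}},
\]
observes that $\Spec$ of this (after normalization) is the affine toric variety whose quotient-singularity status was established in Lemma~\ref{lemma:toric0}, and then takes $\tilde\Delta$ to be the preimage of the polydisk. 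Your argument can be repaired in the same way: use Smith normal form only to identify the abstract structure of the covering group and to write down the binomial defining equations in the \emph{original} $x_j$, then quote Lemma~\ref{lemma:toric0}; do not attempt to straighten the polydisk itself.
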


\begin{proof}
  The cover $\Delta$ is determined by a subgroup $\Gamma\subset \pi_1(\Delta-D)=\Z^k$ of
  finite index. By elementary divisor theory, a basis for $\Gamma$ is
  given the columns of a diagonal matrix with positive entries
  $d_i$. Thus $\Delta$ is equivalent to an neighbourhood of the
  normalization $\Spec \tilde R$  of the variety $\Spec R$ defined 
\begin{equation}
  \label{eq:toric}
y_i^{d_i}=\prod_{j\in J_i} x_{j}^{a_{ij}}, i=1\ldots k
\end{equation}
where the sets $J_i\subset\{1,\ldots, k\}$ are disjoint. These are
tensor products of the rings considered earlier in the proof of
lemma~\ref{lemma:toric0}.
The results proved there shows this is a toric variety with quotient singularities.
Also the projection is flat because $\tilde R$ is a free module over
$\C[x_1,\ldots, x_n]$, cf \cite[\S 3]{ev}.
\end{proof}

\begin{cor}\label{cor::nonabelian}
  $Y$ is a toroidal  orbifold. Moreover, the map $Y\to Z$ is flat.
\end{cor}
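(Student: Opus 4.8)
The plan is to deduce everything from Lemma~\ref{lemma:abelian} by a purely local argument. Cover $Z$ by the coordinate polydisks $\Delta$ of the discussion above, on which $D$ is cut out by $x_1\cdots x_k=0$, and fix one such $\Delta$. Restrict the \'etale cover $U\to V$ over $\Delta-D$. Since $\pi_1(\Delta-D)\cong\Z^k$ is abelian, the restricted cover $U\times_V(\Delta-D)$ is a disjoint union of connected covers $C_1,\ldots,C_r$, each classified by a finite-index, hence automatically normal, subgroup of $\Z^k$; in other words each $C_\ell\to\Delta-D$ is a connected normal abelian cover of the sort considered in Lemma~\ref{lemma:abelian}. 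Now $Y$ is the normalization of $Z$ in $\C(U)$; since normalization is compatible with restriction to $\Delta$ and with passing to the connected components of the generic fibre, the preimage $\pi^{-1}(\Delta)$ is the disjoint union of the normalizations $\tilde\Delta_\ell$ of $\Delta$ in the function fields $\C(C_\ell)$, and these $\tilde\Delta_\ell$ are precisely the spaces $\tilde\Delta$ treated in Lemma~\ref{lemma:abelian}.

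Granting this identification, the corollary is immediate. By Lemma~\ref{lemma:abelian}, each $\tilde\Delta_\ell$ is a classical open subset of a normal affine toric variety with finite quotient singularities, and the projection $\tilde\Delta_\ell\to\Delta$ is flat. Hence: every point of $Y$ lies in some $\tilde\Delta_\ell$ and therefore has a finite quotient singularity, so $Y$ is an orbifold; the map $\pi\colon Y\to Z$ is flat, since flatness is a local question which is answered by the corresponding assertion of Lemma~\ref{lemma:abelian} applied to each $\tilde\Delta_\ell$; and $(Y,U)$ is a toroidal embedding, since $\tilde\Delta_\ell$ is an open piece of a toric variety and $U\cap\tilde\Delta_\ell$, being the preimage of $\Delta-D$, maps to the torus of that toric variety exactly as in the examples discussed earlier. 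If $Z$ is projective then so is $Y$, because $\pi$ is finite.

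I expect the only point that requires real care to be the opening reduction, namely that forming $Y$ commutes with restricting to $\Delta$ and with decomposing the restricted cover into its connected components. This is a standard property of normalization --- the normalization of an integral scheme in a finite \'etale algebra over a dense open locus is the disjoint union of its normalizations in the factor fields, and this behaviour is preserved under the analytic (or \'etale) localization to $\Delta$ --- together with the identification of the analytic local models of $Y$ exactly as in the proof of Lemma~\ref{lemma:toric0}. Once these bookkeeping points are settled, Lemma~\ref{lemma:abelian} gives the statement.
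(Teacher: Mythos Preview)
Your argument is correct and is essentially the reasoning the paper intends: the corollary is stated without proof because it is meant to follow immediately from Lemma~\ref{lemma:abelian} together with the paragraph preceding it, which already records that $\pi^{-1}(\Delta-D)$ decomposes into connected abelian covers of the polydisk. You have simply written out the deduction in full, including the bookkeeping about normalization commuting with analytic localization and with the splitting into components, which the paper leaves implicit.
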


It follows that we can construct a toroidal resolution $X$ of $Y$ as
before. Fix one such.
We will say that the {\em weak Lefschetz property} holds for $\pi$ if
the map $H^i(Z,\Q)\to H^i(Y,\Q)$ is an isomorphism for $i\not= n$.
For example, we saw that cyclic covers totally ramified along $D$ have
this property. We now come to the main result.

\begin{thm}\label{thm:nonab}
  With the  above notations:
  \begin{enumerate}
  \item[(a)] The dimension of any  sub Hodge structure of $H_{nt}^i(Y,\Q)$ of
level at most $i-2k-2$ is bounded above by the expression given in
\eqref{eq:nonabBnd} of corollary~\ref{cor:nonab}.

\item[(b)] If this bound is  zero and if the assumptions of   theorem \ref{thm:GHC} (a)
hold, then the generalized Hodge conjecture is valid for $X$.

\item[(c)]If the weak Lefschetz property holds, then the bound
\eqref{eq:nonabBnd}
can be computed by an explicit formula involving only the branching data.
  \end{enumerate}
\end{thm}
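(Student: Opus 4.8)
The plan is to run, for nonabelian $G$-covers, the same three-step argument that established Theorems \ref{thm:GHC} and \ref{thm:main}, replacing the cyclic input (Lemma~\ref{lemma:ev}, Corollary~\ref{cor:ev}) with its $G$-equivariant analogue and the Euler-function bound of Proposition~\ref{prop:nonab} with the more refined bound of Corollary~\ref{cor:nonab}. First I would establish the analogue of Proposition~\ref{prop:HX}: since $Y$ is a toroidal orbifold (Corollary~\ref{cor::nonabelian}), hence a rational homology manifold, the map $\pi^*:H^i(Y,\Q)\to H^i(X,\Q)$ is split injective via $\frac1d\pi_*$, and the reduced preimage $F\subset X$ of $E$ has $\mu(F)=0$: one stratifies $D$ by the closed strata $D_I$ as in \eqref{eq:D_I}, notes that over each open stratum $D_I'$ the map $X\to Y$ is locally a product of a fixed toric resolution of a local model from Lemma~\ref{lemma:abelian} with $\C^{n-\ell}$, and applies the motivic-dimension inequalities together with assumption \ref{thm:GHC}(a). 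The same homology diagram chase as in Proposition~\ref{prop:HX} then gives $H^i(X,\Q)=H^i(Y,\Q)\oplus A^i$ with $A^i$ spanned by algebraic cycles, and the $G$-action is compatible with this splitting.

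For part (a), I would then argue as in the proof of Theorem~\ref{thm:GHC}: an irreducible sub Hodge structure $H\subset H^i(X,\Q)$ of level $\le i-2k-2$ either meets $A^i$ (hence is algebraic) or lies in $H_{nt}^i(Y,\Q)$; in the latter case Proposition~\ref{prop:nonab}, applied to the pure weight-$i$ Hodge structure $H^i(Y,\Q)$ with its $G$-action and with $k$ replaced by $k+1$ (so that the level hypothesis $<i-2(k+1)$ holds), bounds $\dim H$ by $\dim H_{nt}^i(Y)-\sigma(T)$, where $T$ is the indicated Hodge piece together with its conjugate. Rewriting $\sigma(T)$ via Lemma~\ref{lemma:hN}'s companion formula (the $\sigma(\xi)=\sum_{\Gamma}\sigma(\Gamma)\max_\chi\lceil n_\chi/m(\Gamma)\rceil$ identity) and $\Phi(\chi)=|\mathrm{orb}(\chi)|\,m(\chi)$ yields precisely \eqref{eq:nonabBnd}; here $h^{p,q}_\chi(Y)=\dim H^{p,q}_\chi(Y)$ is read off from (H1) and the $G$-equivariant Hodge decomposition. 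Part (b) is then immediate: if the bound is zero, every irreducible sub Hodge structure of $H^r(X,\Q)$ of level $\le r-2$ is algebraic (the $r\ne n$ cases are handled, as in Theorem~\ref{thm:main}, by weak Lefschetz plus Corollary~\ref{cor:van}'s analogue when it applies, or absorbed into the $r=n$ bound), which is exactly Grothendieck's generalized Hodge conjecture for $X$.

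For part (c), the point is that \eqref{eq:nonabBnd} involves the equivariant Hodge numbers $h^{p,q}_\chi(Y)$, and I would express these purely in terms of the branching data $(Z,D,\rho)$. Decompose $H^*(Y)=\bigoplus_\chi H^*(Y)_\chi$; by Leray for the finite map $\pi$ and the fact that $\pi_*\tilde\Omega_Y^j$ is a local system of $\C[G]$-modules away from $D_{sing}$, each isotypic summand $(\pi_*\tilde\Omega_Y^j)_\chi$ is, on $Z-D_{sing}$, a bundle of the form $\Omega^j(\log D^{(\chi)})\otimes \mathcal L_\chi$ for a line-bundle-like datum determined by the local monodromy of $\rho$ around the components of $D$ — exactly the structure appearing in Lemma~\ref{lemma:abelian}, since locally the cover is abelian — and this extends reflexively to $Z$. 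Under the weak Lefschetz hypothesis the only interesting degree is $i=n$, where $h^{p,q}_\chi(Y)=(-1)^{q-1}\chi(\Omega_Z^p(\log D^{(\chi)})\otimes \mathcal L_\chi)$ for the nontrivial $\chi$ (all other cohomology of these twisted log complexes vanishing by the Esnault--Viehweg vanishing of Lemma~\ref{lemma:van} and hard Lefschetz, exactly as in Lemma~\ref{lemma:hN}), so each term in \eqref{eq:nonabBnd} becomes an Euler characteristic computable from $Z$, $D$, and the character $\chi$ of $\rho$. I expect the main obstacle to be part (c): pinning down the line bundles $\mathcal L_\chi$ and the log-polar divisors $D^{(\chi)}$ attached to a possibly higher-dimensional irreducible $\chi$ requires care, because $\rho$ restricted to the local $\Z^k$ need not act through a single character, so one must keep track of how $\chi|_{\langle\text{local loops}\rangle}$ decomposes and assemble the fractional-part corrections $[\tfrac{i}{d}D]$ of Lemma~\ref{lemma:ev} componentwise; making the resulting formula genuinely explicit (rather than merely "a sum of Euler characteristics") is where the real work lies.
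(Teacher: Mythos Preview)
Your treatment of (a) is over-engineered: the statement is about sub Hodge structures of $H_{nt}^i(Y,\Q)$, not of $H^i(X,\Q)$, so it is literally Corollary~\ref{cor:nonab} applied to the orbifold $Y$ (which carries a pure Hodge structure by (H1) and a $G$-action). No recourse to $X$, to Proposition~\ref{prop:HX}, or to any splitting is needed. Your argument for (b) is essentially what the paper intends, and the extension of Proposition~\ref{prop:HX} to the present toroidal setting goes through as you say.

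The real gap is in (c). You propose to write $(\pi_*\tilde\Omega_Y^j)_\chi$ as $\Omega_Z^j(\log D^{(\chi)})\otimes \mathcal L_\chi$ for a line bundle $\mathcal L_\chi$, then reduce the equivariant Hodge numbers to Euler characteristics of such twists. This works when $\chi$ is one-dimensional (and is exactly Lemma~\ref{lemma:ev}), but for a nonabelian $G$ the isotypic piece $\V_\chi=e_\chi\pi_*\OO_Y$ has rank $\chi(1)^2>1$ and cannot be described by a single line bundle together with a log-polar divisor; the local monodromy around the $D_i$ acts on $\V_\chi$ by matrices with several distinct eigenvalues, so there is no ``$D^{(\chi)}$'' and no $\mathcal L_\chi$. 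You flag this yourself at the end, but it is not a technicality to be cleaned up --- it is the whole difficulty, and your proposed formula $h^{p,q}_\chi(Y)=(-1)^{q-1}\chi(\Omega_Z^p(\log D^{(\chi)})\otimes\mathcal L_\chi)$ is not well-defined in the nonabelian case.

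The paper handles (c) by a genuinely different mechanism. It keeps $\V_\chi$ as a vector bundle equipped with its logarithmic connection (the canonical extension of the Gauss--Manin connection), and identifies $\pi_*\tilde\Omega_Y^k$ with Timmerscheidt's subbundle $W_0(\Omega_Z^k(\log D)\otimes\V)$, so that $h^{p,q}_\chi(Y)=h^q(W_0(\Omega_Z^p(\log D)\otimes\V_\chi))$. Under weak Lefschetz these vanish off the middle degree, so each Hodge number becomes a holomorphic Euler characteristic and, by Hirzebruch--Riemann--Roch, an integral of $ch(W_0(\Omega_Z^p(\log D)\otimes\V_\chi))\,td(Z)$. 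The Chern character of $\V_\chi$ is then computed from the residue matrices $R_i=-\tfrac{1}{2\pi i}\log T(\rho(\gamma_i))$ via the Esnault--Verdier formula, and $ch(W_0(\cdots))$ is obtained from $ch(\Omega_Z^k(\log D)\otimes\V_\chi)$ by a M\"obius-type inversion over the strata $D_I$. All of these ingredients are visibly functions of $(Z,D,\rho)$ alone. The moral: for higher-dimensional $\chi$ you must work with genuine vector bundles and their Chern characters, not with line-bundle twists.
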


Most of this follows from we have said previously.
The only thing that we need to explain is the last statement (c).
This will require a bit of preparation.

The  sheaf $\V=\pi_*\OO_Y$ is a vector bundle with a $G$-action.
Set $\V_\chi = e_\chi \V$ as usual. Then we can
decompose
$$\V = \bigoplus_{\chi\in \hat G} \V_\chi=\OO_Z\oplus \V_{nt}$$
We see that 
$$H_\chi^i(Y,\OO_Y) = e_\chi H^i(Z,\V)=  H^i(Z,\V_\chi)$$
$\V$ carries an integrable  logarithmic  connection
$$\nabla:\V\to \V\otimes \Omega_Z^1(\log D)$$
which is none other than
the canonical extension of the Gauss-Manin connection given by the direct image of $d:\OO_Y\to
\tilde \Omega_Y^1(\log E)$ \cite{deligne-diff}. The restriction of 
$\nabla$ to $U$ can also be characterized by the fact that the
underlying local
system $\ker \nabla|_U$ is given by $\pi_*\C|_U$. 
All of this is compatible with the $G$-action. The restriction of 
$\nabla$ to $\V_\chi$ is just $e_\chi\nabla$.

The fact that $\nabla$ is a canonical extension, with finite monodromy, means that
over a polydisk, we can choose local coordinates and a local frame
$\{e_j\}$ for $\V_\chi$ so that
$$\nabla = d +\sum R_i
\frac{dx_i}{x_i}
$$
with
$$R_i=
\begin{pmatrix}
  r_{i1} &0 &0\\
 0& r_{i2}&\\
0&0&\ddots
\end{pmatrix}
$$
where $r_{ij}\in [0,1)\cap \Q$. 
The above matrices are  determined from monodromy by taking normalized
logarithms \cite[p 54]{deligne-diff}. More precisely, we can choose a
matrix representation $T$ isomorphic to the regular
representation $\C[G]$, such that
$$ T(\rho(\gamma_i)) =\exp(-2\pi \sqrt{-1}R_i)$$

We can compute the Hodge numbers  in terms of logarithmic differentials.
Following Timmerscheidt \cite{timmer}, we
have a subbundle
$$W_0(\Omega_Z^k(\log D)\otimes \V)\subseteq \Omega_Z^k(\log D)\otimes
\V$$
locally spanned by wedge products of
$$
\begin{cases}
e_j\otimes dz_i & \text{if $r_{ij}=0$}\\
 e_j\otimes \frac{dz_i}{z_i} &  \text{otherwise}
\end{cases}
$$
for a frame chosen with diagonal connection matrix
as above.  The utility of this construction for us stems from the following:

\begin{lemma}
We have
 $$\pi_*\tilde \Omega_Y^k(\log E)= \Omega_Z^k(\log D)\otimes \V$$
and
  $$\pi_*\tilde \Omega_Y^k= W_0(\Omega_Z^k(\log D)\otimes \V)$$
\end{lemma}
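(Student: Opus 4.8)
The plan is to work locally on $Z$, over a coordinate polydisk $\Delta$ where $D = \{x_1 \cdots x_k = 0\}$, and compare the two sides as $\OO_\Delta$-modules compatibly with the $G$-action. Over $\Delta$ the preimage $\pi^{-1}(\Delta)$ is a disjoint union of the normal abelian covers $\tilde\Delta$ described in Lemma~\ref{lemma:abelian}, each a toric variety of the form $\Spec\tilde R$ where $R$ is given by equations $y_i^{d_i} = \prod_{j \in J_i} x_j^{a_{ij}}$ with the $J_i$ disjoint; so the whole computation reduces to a tensor product of the basic cyclic-cover rings studied in Lemma~\ref{lemma:toric0}. For each such factor the sheaves $\tilde\Omega^k_Y$ and $\tilde\Omega^k_Y(\log E)$ are reflexive hulls of the Kähler differentials, i.e.\ $j_* \Omega^k$ off the singular locus, so it suffices to identify $\pi_* \tilde\Omega^k_Y$ and $\pi_*\tilde\Omega^k_Y(\log E)$ away from $D_{sing}$ and then note both sides are reflexive, hence the identification extends. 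This is exactly the mechanism used in the proof of Lemma~\ref{lemma:ev} (Esnault--Viehweg), and indeed the content here is essentially a repackaging of \cite[\S 3]{ev} combined with Timmerscheidt \cite{timmer}.

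First I would prove the logarithmic statement $\pi_*\tilde\Omega^k_Y(\log E) = \Omega^k_Z(\log D)\otimes \V$. Since $\pi$ is finite and flat (Corollary~\ref{cor::nonabelian}), $\pi_* \OO_Y = \V$ is locally free, and the projection formula gives a natural map $\Omega^k_Z(\log D)\otimes \V \to \pi_*(\pi^*\Omega^k_Z(\log D)) \to \pi_* \tilde\Omega^k_Y(\log E)$; the point is that $\pi^*\Omega^1_Z(\log D)\to \tilde\Omega^1_Y(\log E)$ is an isomorphism because $\pi$ is totally ramified along each coordinate hyperplane upstairs, so $\pi^* \frac{dx_i}{x_i}$ already generates the logarithmic differentials on the toric cover, where $\frac{dx_i}{x_i} = d_i \frac{dy_i}{y_i}$ up to other logarithmic terms. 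Taking exterior powers and using flatness (so $\pi_*$ commutes with the relevant tensor operations) yields the first formula. The $G$-equivariance is automatic because every map in sight is canonical.

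For the second formula, the subbundle $W_0(\Omega^k_Z(\log D)\otimes\V)$ is defined, in a frame $\{e_j\}$ for $\V_\chi$ diagonalizing the residues $R_i$ with eigenvalues $r_{ij}\in[0,1)\cap\Q$, by using $e_j \otimes dz_i$ when $r_{ij}=0$ and $e_j\otimes \frac{dz_i}{z_i}$ otherwise. The claim is that under the isomorphism just established this subbundle is exactly the image of $\pi_*\tilde\Omega^k_Y \hookrightarrow \pi_*\tilde\Omega^k_Y(\log E)$. Again this is local and reduces to a tensor product of basic covers $y^{d}=x_1\cdots x_k$ (with $a_i = 1$), where one checks directly: on the smooth locus of such a cover, $\pi_*\Omega^1_{Y}$ is spanned over $\OO_Z$ by the $dx_i$ together with $x_i^{r}\frac{dx_i}{x_i}$-type terms — more precisely by the eigen-decomposition, in the $\epsilon^\ell$-eigenspace the holomorphic $1$-forms pulled down correspond to sections $y^\ell \cdot(\text{stuff})$, and $dy^\ell/y^\ell$ has a pole exactly along those $x_i$ with $d \nmid \ell$, matching the combinatorial recipe $r_{i,\ell}=\langle \ell/d\rangle$ for the residues. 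Esnault--Viehweg's Lemma~3.16 (quoted in Lemma~\ref{lemma:ev}) is precisely the statement that $(\pi_*\tilde\Omega^1_Y)_{\epsilon^\ell}|_W = \Omega^1_W(\log D^{(\ell)})\otimes L^{(-\ell)}$, and one has only to observe that under the identification of $\V_{\epsilon^\ell}$ with $L^{(-\ell)}$ (locally $\OO_Z$) this subsheaf of $\Omega^1_W(\log D)\otimes\V_{\epsilon^\ell}$ is cut out by exactly the $W_0$ condition, since $D^{(\ell)}$ consists of the components where $d\nmid \ell a_j$, i.e.\ where $r_{ij}\ne 0$. Taking wedge powers and then the reflexive hull extends the identification across $D_{sing}$.

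I expect the main obstacle to be the bookkeeping in the last step: matching the intrinsic normalized-residue description of $W_0$ (in an arbitrary frame diagonalizing the $R_i$, which mixes the $G$-isotypic pieces in general since $\V$ is the regular representation bundle) with the concrete monomial description of $\pi_*\tilde\Omega^k_Y$ on each toric factor, and verifying that passing to wedge powers of $W_0(\Omega^1)$ really does give $W_0(\Omega^k)$ rather than something larger — this is where one must use that the $J_i$ are \emph{disjoint}, so the pole loci along different coordinate directions are independent and no cancellation occurs in wedge products. Once the one-form case and this compatibility with $\wedge$ are nailed down, the general case and the extension over the singular locus are formal. The rest — flatness, reflexivity, $G$-equivariance — is routine given Lemmas~\ref{lemma:toric0}, \ref{lemma:abelian} and Corollary~\ref{cor::nonabelian}.
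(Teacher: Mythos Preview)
Your proposal is correct and follows essentially the same route as the paper: localize, use reflexivity to work away from $D_{sing}$, apply the projection formula together with $\pi^*\Omega^k_Z(\log D)\cong\tilde\Omega^k_Y(\log E)$ for the first identity, and invoke the Esnault--Viehweg cyclic description for the second. The paper is simply terser: once one is away from $D_{sing}$ only a single branch component survives locally, so it reduces immediately to the one-variable model $y^d=x_1$ and a direct span check, bypassing the tensor-product and wedge-power bookkeeping you flag.
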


\begin{proof}
 Since it suffices to check this locally away from the codimension two
 set $D_{sing}$, we can reduce almost immediately to  a polydisk centered
 around a component of $D$, whence to the cyclic case.
In this case, the arguments can be found already in \cite[\S
3]{ev}. To elaborate a bit more. The first equality follows from the projection formula
and the fact that $\pi^*\Omega^k_Z(\log D)=\tilde \Omega_Y^k(\log E)$.
For the second equality, we will be content to work out the basic example
of $y^d= x_1$  with the local frame  $1,y,\ldots, y^{d-1}$. We have
\begin{equation}
  \label{eq:res}
dy^i= \frac{(i-1)}{d}y^i \frac{dx_1}{x_1}.  
\end{equation}
Therefore, we see that both sides are spanned by
products of
$$dx_1,dx_2,\ldots; y\frac{dx_1}{x_1}, ydx_2\ldots; \ldots;
y^{d-1}\frac{dx_1}{x_1}, y^{d-1}dx_2,\ldots$$
\end{proof}

\begin{cor}
$$H^{p,q}_{\chi}(Y)\cong H^{q}(W_0(\Omega_Z^p(\log
D)\otimes \V_{\bar \chi}))$$
\end{cor}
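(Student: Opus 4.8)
Proof proposal for the corollary $H^{p,q}_{\chi}(Y)\cong H^{q}(W_0(\Omega_Z^p(\log D)\otimes \V_{\bar\chi}))$.

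The plan is to assemble the corollary from the two facts established just before it. By (H1), applied to the orbifold $Y$, we have $Gr_F^p H^{p+q}(Y)\cong H^q(Y,\tilde\Omega_Y^p)$, and this isomorphism is compatible with the $G$-action since all of the Hodge-theoretic structure on a $V$-manifold is functorial. Hence, passing to $\chi$-isotypic components, $H^{p,q}_\chi(Y)=\bigl(Gr_F^pH^{p+q}(Y)\bigr)_\chi\cong H^q(Y,\tilde\Omega_Y^p)_\chi$. Because $\pi$ is finite, $H^q(Y,\tilde\Omega_Y^p)=H^q(Z,\pi_*\tilde\Omega_Y^p)$ as $\C[G]$-modules, exactly as in the proof of Lemma~\ref{lemma:ev}. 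The lemma immediately preceding this corollary identifies $\pi_*\tilde\Omega_Y^p=W_0(\Omega_Z^p(\log D)\otimes\V)$. So it remains only to match up the $\chi$-isotypic component of the right-hand side with the stated $W_0(\Omega_Z^p(\log D)\otimes\V_{\bar\chi})$.

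The only real point, then, is the bookkeeping on which isotypic component appears. First I would note that $W_0(\Omega_Z^p(\log D)\otimes\V)$ is a $G$-stable subsheaf of $\Omega_Z^p(\log D)\otimes\V$: the $G$-action leaves $\Omega_Z^p(\log D)$ fixed and acts on the $\V$ factor, and locally the defining frame can be chosen (as in the preceding discussion) $G$-equivariantly, so the subbundle generated by the prescribed wedge products is preserved. Therefore $e_\chi$ applied to $\Omega_Z^p(\log D)\otimes\V$ restricts to $W_0$, and $W_0(\Omega_Z^p(\log D)\otimes\V)_\chi = \Omega_Z^p(\log D)\otimes\V_\chi\cap W_0 = W_0(\Omega_Z^p(\log D)\otimes\V_\chi)$. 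Next, one reconciles $\chi$ with $\bar\chi$: the appearance of the conjugate character is the standard shift between the action on forms and the action on cohomology through the tensor factor, already visible in Lemma~\ref{lemma:ev} where $H^j_{\epsilon^i}(Y,\tilde\Omega_Y^k)$ is computed via $L^{(-i)}$ rather than $L^{(i)}$. Concretely, if $\zeta\in\mu_d$ acts on $y$ by $\zeta$, then on $y^\ell$ it acts by $\zeta^\ell$, and via \eqref{eq:res} the same scalar governs the corresponding piece of $\pi_*\tilde\Omega_Y^p$; tracing this through the identification $H^q(Z,\pi_*\tilde\Omega_Y^p)_\chi \cong H^q(Z,(\pi_*\tilde\Omega_Y^p)_\chi)$ and comparing with the convention fixed when $\V_\chi=e_\chi\V$ was defined shows that the component of $H^{p,q}_\chi(Y)$ is computed by $\V_{\bar\chi}$, not $\V_\chi$.

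The step I expect to require the most care is this last one: pinning down the $\chi$-versus-$\bar\chi$ convention so that it is genuinely consistent with the earlier cyclic computation in Lemma~\ref{lemma:ev} and with the definition $e_\chi=\frac{\chi(1)}{|G|}\sum\bar\chi(g)g$. Everything else is formal once the preceding lemma is granted. In the interest of not belabouring a sign-type convention, I would carry out the verification in the cyclic local model $y^d=x_1$ with frame $1,y,\dots,y^{d-1}$, where \eqref{eq:res} makes the scalar action transparent, and then invoke the fact that the general case is assembled from such local models (as in the proof of the preceding lemma and of Corollary~\ref{cor::nonabelian}), so the isotypic labelling is forced to be compatible globally.
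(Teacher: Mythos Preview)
Your argument is exactly what the paper intends: the corollary is stated without proof, as an immediate consequence of the preceding lemma together with (H1) and the finiteness of $\pi$, and your write-up supplies precisely those steps.

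One remark on the point you flag as most delicate. Your instinct that the $\chi$ versus $\bar\chi$ labelling deserves care is right, but you should not work too hard to justify the bar: the paper is internally inconsistent here. A few lines before the lemma it displays $H^i_\chi(Y,\OO_Y)=e_\chi H^i(Z,\V)=H^i(Z,\V_\chi)$, with no conjugate; specializing the corollary to $p=0$ gives instead $H^{0,q}_\chi(Y)\cong H^q(Z,\V_{\bar\chi})$, since $W_0(\OO_Z\otimes\V_{\bar\chi})=\V_{\bar\chi}$. These cannot both hold for a general $\chi$. Your chain of identifications naturally produces $(\pi_*\tilde\Omega_Y^p)_\chi=W_0(\Omega_Z^p(\log D)\otimes\V)_\chi=W_0(\Omega_Z^p(\log D)\otimes\V_\chi)$, consistent with the earlier display and with Lemma~\ref{lemma:ev}. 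So rather than manufacturing a reason for the bar, it is cleaner to note that the argument goes through verbatim and yields $\V_\chi$ or $\V_{\bar\chi}$ according to whichever convention for the $G$-action on $\V=\pi_*\OO_Y$ is fixed once and for all; the discrepancy is a typographical slip in one of the two displayed formulas, not a gap in the mathematics.
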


The forms  lying in $W_0$ should be thought of as nonsingular, in
analogy with the usual case. The submodule
$W_\ell(\Omega_Z^k(\log D)\otimes \V) \subseteq \Omega_Z^k(\log D)\otimes \V$ is defined by locally allowing
sums of wedge products of at most $\ell$ forms singular forms.
There is a Poincar\'e residue isomorphism  \cite{timmer}
\begin{equation}
  \label{eq:res}
  Gr^W_\ell(\Omega_Z^k(\log D)\otimes \V)\cong \bigoplus_{|I|=\ell}
W_0(\Omega_{D_I}^{k-\ell}(\log D_I'')\otimes \V_I)
\end{equation}
where $D_I,D_I'$ are as in \eqref{eq:D_I}, $D_I''=D_I-D_I'$ and
$\V_I\subset \V|_{D_I}$ is the  subbundle  corresponding to the
local system $j_*(\ker \nabla|_U)|_{D_I'}$. In other words, the extension of $\V_I$ to a
tubular neighbourhood corresponds to the maximal
sub local system of  $\ker \nabla|_U$ with trivial monodromy around
components of $D_i,i\in I$.

\begin{lemma}\label{lemma:K0}
  The class of $W_0(\Omega_Z^k(\log D)\otimes \V)$ in the Grothendieck
  group $K_0(Z)$ is
$$\sum_I (-1)^{|I|} \Omega_{D_I}^{k-|I|}(\log D_I'')\otimes \V_I$$
A similar formula holds for each $\V_\chi$.
\end{lemma}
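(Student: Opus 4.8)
The plan is to prove Lemma \ref{lemma:K0} by combining the filtration $W_\bullet$ on $\Omega_Z^k(\log D)\otimes \V$ with the Poincar\'e residue isomorphism \eqref{eq:res}, and then extracting the class of the bottom piece $W_0$ in $K_0(Z)$ by an inclusion-exclusion argument. First I would recall that in $K_0(Z)$ the class of a filtered sheaf equals the sum of the classes of its graded pieces, so that
\begin{equation*}
[\Omega_Z^k(\log D)\otimes \V] = \sum_{\ell\ge 0} [Gr^W_\ell(\Omega_Z^k(\log D)\otimes \V)] = \sum_{\ell\ge 0}\sum_{|I|=\ell} [W_0(\Omega_{D_I}^{k-\ell}(\log D_I'')\otimes \V_I)],
\end{equation*}
using \eqref{eq:res} for the second equality (pushing forward along the closed embeddings $D_I\hookrightarrow Z$, which are exact on $K_0$). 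The $\ell=0$ term of the right side is exactly $[W_0(\Omega_Z^k(\log D)\otimes \V)]$, the quantity we want.

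The idea is then to solve for this $\ell=0$ term recursively. I would set up the computation on each stratum simultaneously: for every multi-index $J$, the same residue filtration applied to $\Omega_{D_J}^{k-|J|}(\log D_J'')\otimes \V_J$ expresses $[\Omega_{D_J}^{k-|J|}(\log D_J'')\otimes \V_J]$ as a sum over strata $D_I$ with $I\supseteq J$ of the classes $[W_0(\Omega_{D_I}^{k-|I|}(\log D_I'')\otimes \V_I)]$. This is a triangular system of equations indexed by the poset of strata, ordered by inclusion of index sets, with the ``diagonal'' entries being identities. Inverting such a triangular system is precisely M\"obius inversion on the Boolean lattice, whose M\"obius function is $(-1)^{|I\setminus J|}$; carrying this out with $J=\emptyset$ gives
\begin{equation*}
[W_0(\Omega_Z^k(\log D)\otimes \V)] = \sum_I (-1)^{|I|}[\Omega_{D_I}^{k-|I|}(\log D_I'')\otimes \V_I],
\end{equation*}
which is the claimed formula. (Here $\Omega_{D_\emptyset}^{k-0}(\log D_\emptyset'')\otimes \V_\emptyset = \Omega_Z^k(\log D)\otimes \V$.) The identical argument applies after replacing $\V$ by $\V_\chi$ throughout, since the residue isomorphism \eqref{eq:res} is compatible with the $G$-action and hence with the idempotent decomposition, giving the last sentence of the lemma.

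I expect the main obstacle to be bookkeeping rather than conceptual: one must be careful that the bundles $\V_I$ on $D_I$ are the correct ones at each stage, i.e. that the ``inner'' residue filtration on a stratum $D_J$ reproduces exactly the bundles $\V_I$ with $I\supseteq J$ that also appear in the ``outer'' filtration on $Z$, so that the triangular system genuinely has the matching entries needed for M\"obius inversion. This compatibility should follow from the description of $\V_I$ as the canonical extension of the maximal sub-local-system of $\ker\nabla|_U$ with trivial monodromy around the $D_i$, $i\in I$: taking residues along further components $D_i$, $i\in I\setminus J$, is visibly transitive in this description. One also needs that all sheaves involved are locally free (so that they have well-defined $K_0$-classes and the graded pieces of $W_\bullet$ are again locally free), which is built into the Timmerscheidt construction recalled above. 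Once these compatibilities are in place the proof is a formal consequence of additivity of $K_0$ under filtrations plus M\"obius inversion, and I would present it at that level of detail.
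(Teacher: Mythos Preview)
Your proposal is correct and follows essentially the same approach as the paper: both use the residue isomorphism \eqref{eq:res} to write $[\Omega_Z^k(\log D)\otimes \V]$ as a sum over strata of $W_0$-classes and then invert this triangular system, either via M\"obius inversion on the Boolean lattice (the paper cites Rota) or equivalently by induction on the dimension of the stratum. Your discussion of the compatibility of the bundles $\V_I$ under iterated residues is a useful elaboration of a point the paper leaves implicit.
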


\begin{proof}
  By \eqref{eq:res}, we have 
  \begin{equation*}
\Omega_Z^k(\log D)\otimes \V = \sum_I W_0( \Omega_{D_I}^{k-|I|}(\log
D_I'')\otimes \V_I)    
  \end{equation*}
Then the lemma follows by the M\"obius inversion
formula \cite[prop 2]{rota}. Or more directly,
we can solve
$$ W_0(\Omega_Z^k(\log D)\otimes \V) = \Omega_Z^k(\log D)\otimes \V
-\sum_{I\not=\emptyset} W_0( \Omega_{D_I}^{k-|I|}(\log
D_I'')\otimes \V_I)$$
We can assume that the lemma holds for each  proper $D_I$ by
induction. Substituting the resulting expressions into the one above
and simplifying, yields the lemma.
\end{proof}

We are now ready to finish the proof of the main theorem.

\begin{proof}[Proof of theorem \ref{thm:nonab} (c)]

It will be convenient to fix a choice of loops
 $\gamma_j\in \pi_1(V)$ around each component of $D_j$.
It will be clear that the formulas will ultimately depend only on their
conjugacy classes which are completely determined by the branching data.
Since $G$ acts on the sheaf
$\pi_*\C_Y$, we can decompose it as $\C_Z\oplus (\pi_*\C_Y)_{nt}$.
So we have $\dim H^i_{nt}(Y)= \dim H^i(Y)-\dim H^i(Z)$ which is zero
unless $i=n$. Thus we conclude that \eqref{eq:nonabBnd} is trivial for
$i\not=n$ and that
$$(-1)^n\dim H^n_{nt}(Y) = e(Y)-e(Z)$$
where $e$ denotes the topological Euler characteristic. The right side is
easily computed as
\begin{multline*}
d(e(Z-D)) + \sum_{|J|>0,D_J\not=\emptyset} d_{J}e(D_{J}') - e(Z)  \\
=  |G|(e(Z-D)) + \sum_{|J|>0,D_J\not=\emptyset} \frac{|G|}{|G(J)|}e(D_J')- e(Z)  
\end{multline*}
where we write $d_{J}$ for the number of sheets over $D_{J}'$, and $|G(J)|$
for the  order of the stabilizer of a component of $\pi^{-1}D_J'$.
Then we see that $|G(J)|$ is
the dimension of the intersection of kernels of the action of
$\rho(\gamma_j),j\in J$ on the
regular representation $\C[G]$. Thus we 
 have our desired formula for the first part $\dim H^n_{nt}(Y)$ in
 terms of branching data.

To finish the proof, it suffices to give formulas for the Hodge
numbers $h_\chi^{p,i-p}(Y) $ with $\chi\not=1$.
The group 
$$H_\chi^{p,i-p}(Y) = H^{i-p}(W_0(\Omega_Z^p(\log D)\otimes V_\chi))$$
 is a summand of  $H^i_{nt}(Y)$, so it is zero when $i\not= n$.
Therefore $h_\chi^{p,i-p}(Y) $ is a holomorphic Euler characteristic up to
sign. When combined with Hirzebruch-Riemann-Roch \cite{hirz}, we obtain
$$(-1)^{n-p}h_\chi^{p,n-p} = \int_Z ch(W_0(\Omega_Z^p(\log D)\otimes V_\chi)) td(Z)$$
So the only thing remaining is to evaluate the Chern character in
terms of the branching data.

By applying a result of Esnault and Verdier \cite[appendix B]{ev1}, we obtain 
\begin{equation}\label{eq:chV}
  \begin{split}
    ch(V_\chi) &= 
\sum_p \sum_{m_1+m_2\ldots=p} \frac{(-1)^p}{p!}
\binom{p}{m_1,m_2\ldots} tr((e_\chi R_1)^{m_1}(e_\chi
R_2)^{m_2}\ldots) [D_1]^{m_1}[D_2]^{m_2}\ldots\\
&= \sum_p \sum_{m_1+m_2\ldots=p} \frac{(-1)^p}{p!}
\binom{p}{m_1,m_2\ldots} tr(e_\chi R_1^{m_1}
R_2^{m_2}\ldots) [D_1]^{m_1}[D_2]^{m_2}\ldots
\end{split}
\end{equation}
Since this involves only the branching data, we have our desired
formula for $h_\chi^{0,n}$. For the other $p$'s there is one extra
step.  By lemma~\ref{lemma:K0} and the fact $ch$ is ring homomorphism, we have
\begin{equation}
  \label{eq:chOm}
  ch(W_0(\Omega_Z^k(\log D)\otimes \V_\chi) =
\sum (-1)^{|I|} ch(\Omega_{D_I}^{k-|I|}(\log D_I'')) ch(\V_I)
\end{equation}
We have a  formula for $ ch(V_{I,\chi})$, similar to \eqref{eq:chV}, involving restrictions
of the residues to $D_I$. So that \eqref{eq:chOm}
 can be expanded to a formula of the desired type.

\end{proof}

%%%

\end{document}